\newcommand{\RR}{\mathbb{R}}
\newcommand{\NN}{\mathbb{N}}
\newcommand{\QQ}{\mathbb{Q}}
\newcommand{\ZZ}{\mathbb{Z}}
\newcommand{\vb}{\vec{b}}
\newcommand{\vx}{\vec{x}}
\newcommand{\sgn}{\textnormal{sgn}}
\newcommand{\Det}{\textnormal{det}}
\newtheorem{theorem}{Theorem}[section]
\newtheorem{definition}[theorem]{Definition}
\newtheorem{prop}[theorem]{Proposition}
\newtheorem{corollary}[theorem]{Corollary}
\newtheorem{lemma}[theorem]{Lemma}
\title{Conant's Metric Spectra Problem}
\author{Veljko Tolji\'{c}}
\begin{document}

\maketitle

\begin{abstract}
    In this paper, we try to minimize the scope of possible unique metric spectra up to equivalence. While it is well known that every spectra $S\subseteq \RR^+$ is equivalent to a spectra $T\subseteq \NN$, it has remained open if $T$ could also maintain a desirable combinatorial form. Conant questioned if $T= \{t_1,...,t_n \}_{<}$ could be taken such that $2^i-1 \leq t_i \leq 2^n -1$. In this paper, we come to two partial answers. The first is that the largest element $t_n$ can be chosen such that $t_n\leq 2^n $. Approximating a full solution, we also observe $T$ with the combinatorial form $2^i \leq t_i \leq 2^{n+1}$. Our methods are rather unique in the field as we utilize linear optimization and polygonal geometry to achieve our results. Our work aims to approach a full characterization of metric spectra, and simplify future computational endeavors in the field.
\end{abstract}

\section{Introduction}

The study of combinatorial properties of metric spaces has proven to be a unique gem in the movement of classifying ultrahomogeneous structures. This can be highlighted in the diversity of methodologies used. One of the most significant breaktrhoughs was the recognition of the \textit{4-value condition} by Delhomm\'e, Laflamme, Pouzet, and Sauer \cite{4value}. The 4-value condition, a combinatorial property of a set of positive reals $S\subseteq \RR^+$, is in one-to-one correspondence with the existence of a countable homogeneous metric space $\mathcal{M}$ with spectra equal to $S$. In his thesis, Nguyen van Th\'e analyzed classes of metric spaces from the perspective of the KPT correspondence, working in the intersection of structural Ramsey Theory and topological dynamics \cite{KPT, NVTThesis}. Nguyen van Th\'e also began a classification of classes of metric spaces in the appendix of his thesis by considering the cases when $|S| \leq 4$. The core consideration in the casework is whether or not the $4$-value condition is met, and why/why not. Shortly thereafter, Sauer was able tighten down on the analysis of metric spectra (or distance sets in his phrasing) by viewing $S\subseteq \RR^+$ inherently as a structure with a ternary relation. The relation in question, aptly being titled the \textit{metric triple} relation, can also be used to define the $4$-value condition and hence has become the underpinning of any classification theory for finite metric structures. Conant was able to expand this to \textit{distance magma's}, an algebraic interpretation of the metric triple condition \cite{ConantThesis}. Not only do the distance magma isomorphism classes completely coincide with the metric triple structure in the case of a magma implicitly is defined from an $S\subseteq \RR^+$, but Conant was also able to show that not every distance magma is of this form. This was done via computational verification, alongside Conant proving the following fact.

\begin{theorem}[Conant \cite{ConantThesis}]
    If $S= \{s_1,...,s_n\}_{<}\subseteq \RR^+ $, and $n\leq 6$, then there is a set $T = \{t_1,...,t_n\}_{<} \subseteq \NN$ such that for all $i\leq n$, $2^i -1 \leq t_i \leq 2^n -1$, and $T$ is isomorphic to $S$ as a distance magma (equivalently, as a metric spectra).
\end{theorem}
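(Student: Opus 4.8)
The plan is to reduce the theorem to a finite verification and then dispatch each remaining case by a small feasibility argument. The first step is to isolate the combinatorial invariant. Using the equivalences recalled above — metric spectrum $\leftrightarrow$ metric triple structure $\leftrightarrow$ distance magma, for magmas arising from an actual $S\subseteq\RR^+$ — the isomorphism type of $S=\{s_1<\dots<s_n\}$ is determined by its \emph{triangle pattern}
\[
P_S \;=\; \bigl\{\,(i,j,k)\ :\ 1\le i\le j\le k\le n,\ \ s_k\le s_i+s_j\,\bigr\},
\]
since the ternary metric-triple relation says precisely that $\{a,b,c\}$ are the side lengths of a (possibly degenerate) triangle, and the linear order is recorded by the indexing. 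Hence $S\cong T$ iff $|S|=|T|$ and $P_S=P_T$, and for $n\le 6$ there are only finitely many subsets $P$ of $\{(i,j,k):1\le i\le j\le k\le n\}$ to consider. I would first cut this list down to the patterns actually realized by some $S\subseteq\RR^+$, equivalently those satisfying the $4$-value condition, since the rest are vacuous.

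For a fixed realizable pattern $P$ on $n$ elements, finding a witness $T=\{t_1<\dots<t_n\}\subseteq\NN$ becomes a pure feasibility problem: one seeks integers with $t_{i+1}\ge t_i+1$, with $2^i-1\le t_i\le 2^n-1$, with $t_k\le t_i+t_j$ whenever $(i,j,k)\in P$, and with $t_k\ge t_i+t_j+1$ whenever $(i,j,k)\notin P$ and $i\le j\le k$ (the last family encoding ``strictly not a triangle'' over the integers). These are finitely many linear inequalities in $n$ variables confined to the box $\prod_i[2^i-1,2^n-1]$, so the question is decidable; in practice one relaxes integrality, solves the linear program over $\RR$ inside the box, and rounds, checking that the slack in the strict inequalities survives. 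The diagonal-type triples $(a,b,b)$ with $a\le b$ are automatically triangles and impose nothing, so the genuinely constraining inequalities are comparatively few and the polytope is easy to analyze directly when $n$ is small.

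The remaining — and essential — point is that for every realizable $P$ this box is nonempty, i.e. that the window $2^i-1\le t_i\le 2^n-1$ is never too narrow. The lower corners are not gratuitous: in the extremal pattern where no nontrivial triple is a triangle one is forced to have $t_{i+1}>2t_i$, hence $t_i\ge 2^i-1$; for any other pattern the lower bounds can at worst be met by inflating a realization. The upper bound $t_n\le 2^n-1$ is under opposing pressure from patterns with many triangle triples on the largest element — the extreme being $S\subseteq[a,2a]$, where every triple is a triangle, $t_n\le 2t_1$ forces $t_1\ge 2^{n-1}$, and one then just barely fits $t_1<\dots<t_n$ inside $[2^{n-1},2^n-1]$ with $t_n=2^n-1$. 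I would exploit the monotonicity of triangle patterns (if $s_k\le s_i+s_j$, the triple survives when $i$ or $j$ grows or $k$ shrinks) to sort the realizable patterns into a short list of shapes, keyed by the least index at which $(i,i,i+1)$ fails to be a triangle, and produce an explicit parametric witness for each shape — roughly, let $t_i$ double up to that threshold index and then pack the remaining values as tightly as the constraints on the largest element allow, near the top of the window.

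The main obstacle is exactly this last step: ruling out, across all the realizable $6$-element patterns, any clash between the imposed exponential lower bounds and a pattern that wants its largest distances to stay small. This is why the original argument proceeds by machine-checking the finitely many cases, and why the corresponding statement for general $n$ remains open and motivates the $n$-uniform approximations pursued in the rest of this paper.
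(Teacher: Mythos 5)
There is a genuine gap, though an honestly acknowledged one: your argument reduces the theorem to a finite feasibility check over all triangle patterns realizable by six reals, and then stops, conceding that the ``main obstacle'' --- verifying that every realizable pattern admits integer witnesses inside the box $\prod_i[2^i-1,\,2^n-1]$ --- would be handled by machine. But that verification \emph{is} the content of the theorem; the reduction to finitely many linear-inequality systems is routine, and the two extremal patterns you treat explicitly (no nontrivial triangles, forcing $t_{i+1}>2t_i$ and hence $t_i\ge 2^i-1$; all triples triangles, forcing $t_1\ge 2^{n-1}$ once $t_n=2^n-1$ is imposed) do not begin to exhaust the intermediate patterns, which is exactly where a clash between the exponential lower bounds and constraints keeping the large distances small could in principle occur. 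Note that the paper does not prove this statement either: it is quoted from Conant's thesis, where (as the paper remarks both in the introduction and in its closing discussion) the $n\le 6$ cases were settled by computer verification. So your proposal recovers the same strategy as the actual proof but omits its decisive step, and your sketch of ``sorting the realizable patterns into a short list of shapes'' is not carried out in enough detail to substitute for it.

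One further inaccuracy worth flagging: realizability of a pattern by some $S\subseteq\RR^+$ is \emph{not} equivalent to the $4$-value condition. Every finite $S\subseteq\RR^+$ determines a triangle pattern, whether or not it satisfies the $4$-value condition; that condition instead characterizes when the associated class of finite metric spaces has the amalgamation property (equivalently, when a countable homogeneous metric space with that spectrum exists). The theorem quantifies over all $S\subseteq\RR^+$ with $|S|=n\le 6$, so restricting attention to $4$-value patterns would actually discard cases you are required to handle; fortunately your reduction does not depend on that restriction, but the claimed equivalence should be dropped.
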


The natural followup question of Conant, and the main motivating question of our work, was whether or not this is true for all $n\in \NN$ \cite{OpenProblems}. This lead to our following partial result, and the main emphasis of this paper.
\begin{theorem}
    If $S\subseteq \RR^+$ and $|S| = n$, then $S$ is equivalent as a metric spectra to some $T= \{t_1,...,t_n\}_{<}\subseteq \NN$ where $t_n \leq 2^n$. Moreover, at the cost of increasing the upperbound, we can choose $T$ so that $2^i \leq t_i \leq 2^{n+1} $.
\end{theorem}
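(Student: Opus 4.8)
The plan is first to strip the statement down to a purely combinatorial question about the order type of $S$. Write $S = \{s_1 < \cdots < s_n\}$. Two subsets of $\RR^+$ of the same size are equivalent as metric spectra exactly when the order isomorphism between them preserves the metric triple relation, and for $i \le j \le k$ the triple $(s_i, s_j, s_k)$ is a metric triple precisely when $s_k \le s_i + s_j$ (the other two triangle inequalities being automatic). Hence the equivalence class of $S$ is encoded by the monotone threshold function
\[
m(i,j) \;=\; \max\{\, k : s_k \le s_i + s_j \,\} \;\ge\; \max(i,j),
\]
and the theorem reduces to: given such an $m$, produce positive integers $t_1 < \cdots < t_n$ with $t_k \le t_i + t_j \iff k \le m(i,j)$ for all $i \le j \le k$, subject to $t_n \le 2^n$ (respectively $2^i \le t_i \le 2^{n+1}$). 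This realizability is a system of linear inequalities in the $t_i$: the non-strict ones $t_a + t_b - t_k \ge 0$ whenever $m(a,b) \ge k$, and, over $\ZZ$, the strict ones $t_k - t_a - t_b \ge 1$ whenever $m(a,b) < k$ together with $t_{k+1} - t_k \ge 1$ and $t_1 \ge 1$. It is feasible over $\RR$, witnessed by $S$ itself; the content is therefore integer feasibility together with the size bounds, i.e. minimizing $t_n$ over the integer points of this polyhedron and bounding the optimum by $2^n$.

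For the bound $t_n \le 2^n$ I would build the $t_i$ recursively from smallest to largest. Once $t_1 < \cdots < t_{k-1}$ realize $m$ on $\{1, \dots, k-1\}$, the constraints mentioning index $k$ confine $t_k$ to an interval $(\ell_k, u_k]$ with
\[
\ell_k = \max\!\bigl(\{t_{k-1}\} \cup \{\, t_a + t_b : a \le b < k,\ m(a,b) < k \,\}\bigr), \qquad
u_k = \min\{\, t_a + t_b : a \le b < k,\ m(a,b) \ge k \,\}
\]
($u_k = +\infty$ when there is no positive pair, i.e. $s_k > 2 s_{k-1}$). Since $\ell_k \le 2 t_{k-1}$, and when $u_k$ is finite it equals $t_p + t_q$ for a Pareto-minimal positive pair with $p \le q \le k-1$, the least admissible $t_k$ is at most $2\max_{j<k} t_j + 1$. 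Plotting $(k, \log_2 t_k)$ thus gives a polygonal path of slope at most $1$ per step --- the source of the factor $2$ per level --- and the real problem is how high inside $(\ell_k, u_k]$ to place $t_k$: high values ease future positive constraints $t_{k'} \le t_a + t_b$, low values ease future ``sparse runs'' where $u$ is infinite and strictness forces $t_k = 2t_{k-1}+1$. Managing this trade-off is a small linear program over the polyhedron above; a naive greedy choice is not enough.

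The main obstacle is exactly this passage from $\RR$ to $\ZZ$, and it has two faces. First, one must show $(\ell_k, u_k]$ is always nonempty: the integer choices made so far must not have reversed a sum comparison that a later level tests, i.e. there is no negative pair $(a,b)$ and positive pair $(c,d)$ for a common $k$ with $t_a + t_b \ge t_c + t_d$ although $s_a + s_b < s_c + s_d$. Second, one must keep the ``$+1$''s forced by strictness from accumulating: a run of infinite-$u$ levels starting early in $[n]$ can, under greedy play, pile up an error of order $2^n$, which is precisely why one obtains $2^n$ rather than Conant's conjectured $2^n - 1$. I would handle both by carrying a stronger inductive hypothesis --- that the partial assignment is order-compatible with $S$ on every sum comparison a future level could invoke --- and by amortizing the strictness slack against the room left in later positive constraints, so that even though individual $t_k$ may overshoot $2^k$, the final $t_n$ does not exceed $2^n$. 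This bookkeeping is the technical heart of the argument.

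For the ``moreover'' clause I would run the parallel recursion that hugs the targets $2^k$ from above, taking $t_k$ as small as possible in $(\ell_k, u_k]$ subject to $t_k \ge 2^k$. Imposing $t_i \ge 2^i$ makes the construction rigid --- already for the all-metric pattern, where $t_n \le 2t_1$ forces $t_1 \ge 2^{n-1}$ and hence all $t_i \in [2^{n-1}, 2^n]$ --- and one checks the resulting path never exceeds $2^{n+1}$. The extra factor of two over the first part is both necessary and sufficient: raising every coordinate to at least $2^i$ can double the top coordinate relative to the minimal realization, but no more.
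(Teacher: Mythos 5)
Your reduction to the threshold function $m(i,j)$ and the reformulation as integer feasibility of a system of linear inequalities is correct, and your route (build $t_1<\cdots<t_n$ inductively, confining $t_k$ to an interval $(\ell_k,u_k]$) is genuinely different from the paper, which never constructs the representative coordinate-by-coordinate: it takes a vertex of the spectral polyhedron, computes it exactly by Cramer's rule over an integral full-rank submatrix, and gets both the integrality (after scaling by $|\det(B)|$) and the bound $t_n\le 2^n$ from an inductive determinant estimate for matrices whose rows are covered by $\mathcal{F}+\mathcal{F}$; the lower bounds $2^{n-i+1}v_i\ge v_n$ needed for the ``moreover'' clause come from the same determinant machinery applied to an augmented matrix, followed by scaling and rounding up with $c\ge 2$.

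However, as written your argument has a genuine gap, and you in fact name it yourself: everything hinges on (1) showing that $(\ell_k,u_k]$ always contains an admissible integer, i.e.\ that the integer choices already made never reverse a sum comparison $t_a+t_b$ versus $t_c+t_d$ that some later threshold separates, and (2) the amortization showing that the $+1$'s forced by strict inequalities do not push $t_n$ past $2^n$ even though individual $t_k$ may exceed $2^k$. You observe that a naive greedy choice fails and that a ``stronger inductive hypothesis'' plus ``amortizing the strictness slack'' is needed, but you never formulate that invariant or carry out the bookkeeping, and it is far from clear that such an invariant can be maintained: choosing $t_k$ low to control growth can destroy a future positive constraint $t_{k'}\le t_a+t_k$, while choosing it high can destroy a future negative one, and resolving this tension is exactly where the mathematical content lies (in the paper it is absorbed by the existence of a vertex and the $2^{n-m}$ determinant bound of Theorem~\ref{detbound}). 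The same applies to the ``moreover'' clause: the claim that the recursion hugging $2^k$ from above never exceeds $2^{n+1}$ is asserted, not proved, whereas the paper derives it from the vertex inequality $2^{n-i+1}v_i\ge v_n$ (Theorem~\ref{lowerbound}) together with the ceiling-and-scale step. So the proposal is a plausible plan with a different architecture, but its two central steps are currently missing.
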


 A unique aspect of our proof is our use of linear optimization and polygonal geometry. Our first observation was that an isomorphism class of a metric spectra was uniquely determined by a set of linear inequalities. Each linear inequality defines a half-space, and consequently, the intersection of all such half-spaces defines a polygon. We can then find representatives of our isomorphism class by looking for vertices in our polyhedron. By appropriately scaling a vertex and proving some bounds on determinant computations, we get the desired integral representatives of a metric spectra. An immediate nicety of our approach is that our work is accessible to a broad range of mathematicians. We hope as a consequence, we can help further popularize the classification of metric spectra, and influence new developments and collaboration's in a field with many open problems. 
\\
\\
The organization of our paper is rather standard. Section 2 contains all the preliminary material needed to understand our main result. This includes a primer on finite metric geometry, and a primer on polygons. Given that significant definitions are mentioned in Section 2, we recommend that it not be skipped to ensure a smooth reading experience. Section 3 consists of three parts. First, we establish an immediate connection between metric spectra and cones, and show how this connection can be used to prove Conants' theorem that every metric specta admits a rational representative. We then introduce the cover poset on $\RR^n$, and show how it can be used to appropriately decompose vectors. In the third and final subsection of Section 3, we use the combinatorics of coverings to deduce upperbounds of determinant computations. Putting everything together, we deduce our main theorem. 

\section{Preliminaries}

\subsection{Finite Metric Geometry}

There are many ways one could classify classes of metric spaces. One that has proven to be quite fruitful is categorizing by \textit{metric spectrum} \cite{ConantThesis, KPT, NVTThesis}.

\begin{definition}
    We call a subset $S\subseteq \RR^+$ a metric spectrum. When $S$ is finite, we write $S=\{s_1,...,s_n \}_{<}$ to mean that $s_i$ are in monotonically increasing order.
\end{definition}

The reasoning for this classification is quite simple, if you want to structurally study finite metric spaces, you will need to express them (somehow) as a class of relational structures. Given a metric space $(X,d) $ where the image of $d$ is a subset of $S\subseteq \RR^+ $, we can identify $(X,d)$ as the relational structure $(X, \{R_s\}_{s\in S})$, where $R_s$ are symmetric binary relations with the property that $x R_s y \iff d(x,y) =s$. In this schema, substructures are precisely isometric subspaces of $X$. While this identification will be unimportant for our purposes, it is important to emphasize \textit{why} people choose to study spectras in the first place. While we may only be doing finite combinatorics and analysis of the real line, the end result is the classification of categories of metric spaces where metric spaces are interpreted as relational structures.

\begin{definition}
    Given $S\subseteq \RR^+$, we say $(a,b,c) \in S^3$ forms a \textit{metric triple} if:
    \begin{itemize}
        \item $c\leq a+b$
        \item $b\leq a+c$
        \item $a \leq b+c$
    \end{itemize}
\end{definition}

At its core, being a metric triple means that a metric triangle exists with lengths $a$, $b$, and $c$. In this regard, metric triples are foundational to categorizing metric spectra as metric triangles are the smallest nontrivial building blocks of metric spaces.

\begin{definition}
    We say that $S\subseteq \RR^+$ has the \textit{$4$-value condition} if for every pair of metric triples $(a,b,e), (c,d,e) \in S^3 $, there is an $f\in S$ such that $(b,c,f)$ and $(a,d,f)$ are metric triples.  
\end{definition}

\begin{figure}[htp]
    \centering
  \begin{tikzpicture}
      
         \node[fill,circle] (v_1) at (45 :2cm) {};
         \node[fill,circle] (v_2)  at (135 :2cm){};
         \node[fill,circle] (v_3)  at (225 :2cm){};
         \node[fill,circle] (v_4) at (315 :2cm){};

      \draw[black] (v_1) edge["c"] (v_2);
      \draw[black] (v_2) edge["b"] (v_3);
      \draw[black] (v_3) edge["a"] (v_4);
      \draw[black] (v_4) edge["d"] (v_1);

      \draw[black] (v_2) edge["e"] (v_4);
      \draw[red] (v_1) edge["f"] (v_3);
   
   \end{tikzpicture}
    \caption{Geometric interpretation of the $4$-value condition.}
    \label{fig:enter-label}
\end{figure}
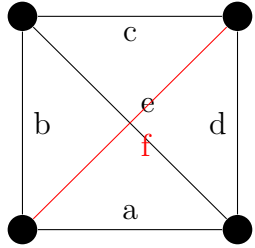

While not pertinent to our paper, an $S$ satisfying the $4$-value condition is equivalent to the associated class of finite metric spaces having the \textit{amalgamation property}. Hence, by better understanding the combinatorics of $S$, we better understand the category of finite metric spaces whose distances are contained in $S$. This leads to an intriguing abstractification whereby we study binary relational structures (metric spaces), by considering a ternary relational structures on sets of reals.

\begin{definition}
    Given $S,T\subseteq \RR^+$, we say $S\sim T$ ($S$ is \textit{equivalent} to $T$) if and only if there is an increasing bijection $f:S\rightarrow T $ with the following property:
    \begin{itemize}
        \item $(x,y,z)\in S^3$ is a metric triple if and only if $(f(x),f(y),f(z))\in T^3$ is a metric triple. 
    \end{itemize}
\end{definition}

An equivalence between sets of reals naturally gives rise to an isomorphism of category for the associated family of metric spaces. Thus, $\sim$ is a combinatorial relation that holds functorial information. \\
\\
The field of metric geometry is vast, and we have only covered a subset of a subset of the entire field. For brevity, we will end this section here as we already have more than enough definitions and context to proceed. However, we would be remiss to not at the very least offer readers other texts to travel further down the rabbit hole. For further readings on relating sets of reals and their combinatorial properties to categories of metric spaces, we recommend Ma\v{s}ulovi\'cs' work on big Ramsey degrees of universal structures \cite{Mas}. For more on structural properties of metric spaces, we recommend Ne\v{s}et\v{r}ils' paper \textit{Metric Spaces are Ramsey} \cite{metareramsey}. This paper also applies the partite construction, a technique that is foundational to structural Ramsey Theory.

\subsection{Introduction to Cones}

We found a way to apply linear optimization and polygonal geometry to better understand $\sim$-classes of finite subsets of $\RR^+$. This requires we vectorize sets $S\subseteq \RR^+$, which leads to our first definition.

\begin{definition}
A \textit{spectrum} is a vector $\vec{a}=(a_1,...,a_n) \in \RR^n$ with the property that $0< a_1<...<a_n $. A \textit{semispectrum} is a vector $\vec{a} \in \RR^n$ with the property that $0\leq a_1\leq...\leq a_n$. A spectrum is called \textit{integral} if it belongs to $\ZZ^n$.
\end{definition}

Spectra are then simply encodings for a finite set $S\subseteq \RR^+$, though viewing them instead as vectors is quite important to our analysis. It follows then that we need to transfer our definitions of the last subsecton into our new framework.

\begin{definition}
    We say two spectra $\vec{a}$ and $\vec{b}$ are equivalent, and write $\vec{a} \sim \vec{b}$ if for every (not necessarily distinct) triple $i,j,k \in \{1,...,n\}$, $a_i+a_j \geq a_k$ if and only if $b_i+b_j \geq b_k$. 
\end{definition}


Notice that the last condition is a rephrasing of preservation of metric triple. We state it in this new way for good reason, as a triple being a metric triple is now equivalent to satisfying a linear inequality. This leads us to polygons. We start with (arguably) the simplest polygon, the \textit{convex cone}.

\begin{definition}
    We say $C\subseteq \RR^n$ is a \textit{convex cone} if:
    \begin{itemize}
        \item For all $\alpha,\beta \geq 0$, $\vec{x},\vec{y} \in C$, $\alpha \vec{x} + \beta \vec{y} \in C$.
    \end{itemize}
    We say a cone $C$ is \textit{pointed} if $C\cap -C = \{0\}$.
\end{definition}

\begin{definition}
    Let $\vec{e}_i$ denote the $i$th canonical basis vector of $\RR^n$ (where $n$ will be clear from context). 
\end{definition}

\begin{lemma}
Let $A:\RR^m \rightarrow \RR^n$ be a linear transformation. The set of solutions to the equation $A\vec{x} \geq 0$ forms a closed pointed convex cone.
\end{lemma}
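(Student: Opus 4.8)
The plan is to set $C = \{\vec{x}\in\RR^m : A\vec{x}\ge 0\}$, with the inequality read coordinatewise, and then verify the three defining properties — being a convex cone, being closed, and being pointed — one at a time; each is elementary and no special machinery is required.

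First I would check the convex cone property straight from the definition. If $\vec{x},\vec{y}\in C$ and $\alpha,\beta\ge 0$, then linearity gives $A(\alpha\vec{x}+\beta\vec{y}) = \alpha(A\vec{x}) + \beta(A\vec{y})$, and a nonnegative linear combination of coordinatewise-nonnegative vectors is again coordinatewise nonnegative, so $\alpha\vec{x}+\beta\vec{y}\in C$. For closedness I would observe that $C = A^{-1}(\RR^n_{\ge 0})$ is the preimage of the closed orthant under the continuous map $A$, and hence closed; alternatively $C = \bigcap_{i=1}^{n}\{\vec{x} : (A\vec{x})_i \ge 0\}$ is a finite intersection of closed half-spaces, each being the preimage of $[0,\infty)$ under a continuous linear functional.

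The one point that needs care is pointedness. I would compute $C\cap(-C)$ directly: $\vec{x}\in -C$ means $A(-\vec{x})\ge 0$, i.e.\ $A\vec{x}\le 0$, so $C\cap(-C) = \{\vec{x} : A\vec{x}\ge 0 \text{ and } A\vec{x}\le 0\} = \ker A$. Thus $C$ is pointed exactly when $\ker A = \{0\}$. This is the reading I expect to be in force throughout the paper: in the intended applications the rows of $A$ encode the metric-triple inequalities $s_i + s_j \ge s_k$ of a spectrum, and among these are the degenerate ones $s_i + s_i \ge s_i$ (the case $i=j=k$), i.e.\ $x_i \ge 0$ for every coordinate $i$; hence $C \subseteq \RR^m_{\ge 0}$, which is a pointed cone, forcing $C$ to be pointed as well. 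The main — essentially the only — obstacle is this gap between the blanket hypothesis ``linear transformation'' and the pointedness claim (e.g.\ $A = 0$ yields $C = \RR^m$, which is not pointed for $m\ge 1$); I would close it either by adding the assumption that $A$ is injective, equivalently $\ker A = \{0\}$, or, matching the way the lemma is used downstream, by requiring that the coordinate functionals $\vec{x}\mapsto x_i$ appear among the rows of $A$.
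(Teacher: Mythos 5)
Your proposal is correct, and for the two properties the paper actually verifies it follows the same route: the cone property falls out of linearity of $A$, and closedness comes from writing the solution set as the preimage of the closed nonnegative orthant under the continuous map $A$. Where you diverge is pointedness, and there your treatment is more careful than the paper's: the paper's own proof never addresses $C\cap(-C)=\{0\}$ at all, and you are right that the statement as written fails for an arbitrary linear map, since $C\cap(-C)=\ker A$ (e.g.\ $A=0$ gives $C=\RR^m$). Your proposed repair is exactly what makes the lemma true where it is used: a matrix associated to a spectrum is explicitly required to contain every transposed canonical basis vector $e_i^T$ among its rows (this is a separate clause of the definition, not the degenerate triple $i=j=k$, which is excluded by the requirement $i\leq j<k$), so every solution lies in the nonnegative orthant and in fact $\ker A=\{0\}$ because the identity appears as a submatrix; either observation yields pointedness. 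In short, your one substantive addition --- that pointedness needs $\ker A=\{0\}$, or the presence of the coordinate rows, rather than bare linearity --- repairs an omission in the paper's proof rather than revealing a gap in yours.
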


\begin{proof}
The set of vectors with positive entries is closed and $A$ is continuous, so it is clear the set of solutions forms a closed set. The convex cone conditions are immediate and easy to verify from the linearity of $A$. 
\end{proof}

We now define a family of vectors $\vec{e}_{(i,j,k)}$ that will essentially be used to bookkeep whether or not the triple $(x_i,x_j,x_k)$ is metric.

\begin{definition}\label{vectors}
    Let $(i,j,k) \in \{1,...,n\}^3 $ with $i\leq j < k $ and let $\vec{e}_i$ denote the $i$th canonical basis vector. We let $\vec{e}_{(i,j,k)}$ denote the vector in $\RR^n $ given by 

$$   
\vec{e}_{(i,j,k)} =\vec{e}_i+\vec{e}_j-\vec{e_k} 
$$    
\end{definition}

We now classify $\sim$-classes as a subset of a convex pointed cone. 

\begin{definition}
    Let $\vec{a} $ be a spectrum. Let $E$ be the family of vectors defined as follows.
    \begin{itemize}
        \item For all $(i,j,k) \in \{1,...,n\}^3$, $i\leq j <k$, either $e_{(i,j,k)}^T\in E $ or $-e_{(i,j,k)}^T \in E$.
        \item For all $(i,j,k) \in \{1,...,n\}^3$, $i\leq j <k$, if $a_i+a_j \geq a_k$, then $e_{(i,j,k)}^T \in E$.
        \item For all $i,j \in \{1,...,n\}$, $i<j$, $p_{(i,j)}$ which is the unique vector defined by $p_{(i,j)}(i)=-1$, $p_{(i,j)}(j) = 1$, and $0$ otherwise.
        \item For all $i \in \{1,...,n\}$, the transposed canonical basis vector, $e^T_i$.
    \end{itemize}
    Consider a matrix $A$ who's row vectors are the vectors in $E$. We call $A$ a matrix associated to $\vec{a}$.
\end{definition}

\begin{lemma}\label{simcones}
    The closure of a $\sim$-class is a pointed convex cone. 
\end{lemma}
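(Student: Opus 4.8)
The plan is to exhibit the $\sim$-class of a spectrum $\vec a=(a_1,\dots,a_n)$ as a nonempty convex set that is invariant under scaling by positive reals, and then to invoke the elementary fact that the closure of any such set is a pointed convex cone. First I would unwind the definition of $\sim$. Every $\vec b$ in the class of $\vec a$ is a spectrum, so $0<b_1<\dots<b_n$; moreover, for a triple $(i,j,k)$ with $a_k\le\max(a_i,a_j)$ both $a_i+a_j\ge a_k$ and $b_i+b_j\ge b_k$ hold automatically, so the only triples imposing a genuine condition are (up to swapping $i,j$) those with $i\le j<k$. For each such triple, membership in the $\sim$-class demands $b_i+b_j-b_k\ge 0$ when $a_i+a_j\ge a_k$, and $b_i+b_j-b_k<0$ otherwise. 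Hence the $\sim$-class of $\vec a$ is precisely the intersection of finitely many open homogeneous half-spaces (coming from $b_1>0$, the strict monotonicity $b_{\ell+1}-b_\ell>0$, and the strict triple conditions) with finitely many closed homogeneous half-spaces (the non-strict triple conditions). In particular it is convex, it is nonempty since it contains $\vec a$, and it is closed under multiplication by any $\lambda>0$ because all defining inequalities are homogeneous.

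Next I would record the topological fact that if $S\subseteq\RR^n$ is nonempty, convex, and satisfies $\lambda S\subseteq S$ for all $\lambda>0$, then $\overline S$ is a convex cone. Convexity of $\overline S$ is standard; invariance under scaling by $\lambda>0$ holds because $v\mapsto\lambda v$ is a homeomorphism, so $\lambda\overline S=\overline{\lambda S}\subseteq\overline S$; and $0\in\overline S$ because for a fixed $v\in S$ we have $\tfrac1m v\in S$ and $\tfrac1m v\to 0$. Combining these, for $x,y\in\overline S$ and $\alpha,\beta\ge 0$ not both zero, $\alpha x+\beta y=(\alpha+\beta)\big(\tfrac{\alpha}{\alpha+\beta}x+\tfrac{\beta}{\alpha+\beta}y\big)\in(\alpha+\beta)\overline S\subseteq\overline S$ (convexity, then scaling), while $\alpha=\beta=0$ gives $0\in\overline S$. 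Applying this to $S=$ the $\sim$-class of $\vec a$ yields that its closure is a convex cone.

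Finally, for pointedness I would note that the $\sim$-class lies in the open positive orthant, since $0<b_1<\dots<b_n$ forces every coordinate to be positive; hence its closure lies in the closed nonnegative orthant $O=\{x:x_i\ge 0\text{ for all }i\}$. As $O$ is pointed, $\overline S\cap-\overline S\subseteq O\cap-O=\{0\}$, and since $0\in\overline S$ this is an equality, so $\overline S$ is pointed. (Alternatively one can invoke the earlier lemma: the $\sim$-class is contained in $\{x:Ax\ge 0\}$ for a matrix $A$ associated to $\vec a$, since every defining inequality of the class is one of the rows of $A$, possibly relaxed from strict to non-strict; that solution set is a closed pointed convex cone, so the closure inherits pointedness.)

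I do not anticipate a serious obstacle: the only content is cleanly separating the "strict part" of the $\sim$-class (which disappears under closure) from its genuinely conic part, and the sole point needing care is that passing to the closure does not escape the nonnegative orthant — which is immediate from monotonicity of closure under inclusion and is exactly what secures pointedness. I would explicitly flag that the present lemma does \emph{not} assert $\overline S=\{x:Ax\ge 0\}$; that sharper equality would additionally require the strict inequalities defining the class to be attainable relative to the non-strict ones, and is not needed here.
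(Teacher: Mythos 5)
Your proof is correct, but it takes a genuinely different route from the paper's. The paper works through the associated matrix $A$: it notes that the $\sim$-class of $\vec a$ is contained in the closed pointed convex cone $\{\vec x : A\vec x \ge 0\}$, asserts that $\{\vec x : A\vec x > 0\}$ is exactly the $\sim$-class, and argues that every point of $\{\vec x : A\vec x \ge 0\}$ is a limit of class members, so that the closure \emph{equals} this polyhedral cone and inherits closedness, convexity and pointedness from the earlier lemma. You instead prove only what the statement asserts: the class is a nonempty convex set cut out by homogeneous strict and non-strict inequalities (after discarding the vacuous triples), hence invariant under positive scaling; the closure of such a set is a convex cone containing $0$; and pointedness follows because the class lies in the open positive orthant, whose closure meets its negative only in $\{0\}$. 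For the lemma as stated your route is more economical and arguably more careful: the paper's identification of $\{\vec x : A\vec x>0\}$ with the $\sim$-class is not literally exact, since the class permits equality $b_i+b_j=b_k$ whenever $a_i+a_j\ge a_k$ (for instance $\vec a$ itself when $a_i+a_j=a_k$), and the density step tacitly assumes the strict system has solutions; none of this is needed in your argument. What the paper's approach buys is the sharper conclusion that the closure is precisely $\{\vec x : A\vec x \ge 0\}$, an exact polyhedral description in the spirit of the later sections, and you correctly flag that your proof neither claims nor requires that identification.
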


\begin{proof}
    Fix a spectrum $\vec{a}$ and let $A$ be a matrix associated to $\vec{a}$. First notice that all solutions of $A\vec{x}\geq 0$ are necessarily semispectra. Also, if $\vec{b} \sim \vec{a}$, then $A \vec{b} \geq 0$. As a consequence, the set of solutions to the inequality $A\vec{x} \geq 0$ contains the $\sim$-class and is a closed pointed convex cone. Now if $\vec{b}$ is such that $A\vec{b} \geq 0$, but $\vec{b} \nsim \vec{a}$, it is precisely because for some $i,j,k$, $b_i+b_j = b_k$, but $ a_i+a_j<a_k$. Note that solutions to $A\vec{x} >0$ are exactly spectra $\sim$-equivalent to $\vec{a}$, and so we can find a sequence of members from the $\sim$-class of $\vec{a}$ that converges to $\vec{b}$ as $\{\vec{x}: A\vec{x} >0 \}$ is the connected interior of the cone $ \{\vec{x}: A\vec{x} \geq 0 \} $, and $\vec{b}$ is on the boundary. Hence, $\{\vec{x}: A\vec{x} \geq 0 \}$ is precisely the closure of the $\sim$-class of $\vec{a}$. 
\end{proof}

\begin{corollary}
Members of the closure of a $\sim$-class are semispectra. 
\end{corollary}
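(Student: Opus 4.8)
The plan is to read the conclusion straight off the description of the cone obtained in Lemma~\ref{simcones}. Fix a spectrum $\vec{a}$ and let $A$ be a matrix associated to $\vec{a}$ in the sense of the definition preceding Lemma~\ref{simcones}. That lemma tells us that the closure of the $\sim$-class of $\vec{a}$ is exactly the solution set $\{\vec{x} : A\vec{x}\geq 0\}$, so it suffices to check that every such $\vec{x}$ is a semispectrum, i.e.\ satisfies $0\leq x_1\leq \dots \leq x_n$.

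First I would use the rows of $A$ coming from the transposed canonical basis vectors $e_i^T$: the inequality $e_i^T\vec{x}\geq 0$ is precisely $x_i\geq 0$, giving nonnegativity of every coordinate. Next I would use the rows $p_{(i,j)}$ for $i<j$: by definition $p_{(i,j)}$ has a $-1$ in coordinate $i$, a $+1$ in coordinate $j$, and zeros elsewhere, so $p_{(i,j)}\vec{x}\geq 0$ reads $x_j - x_i\geq 0$. Applying this to the consecutive pairs $(1,2),(2,3),\dots,(n-1,n)$ yields $x_1\leq x_2\leq \dots\leq x_n$, and combined with $x_1\geq 0$ we get $0\leq x_1\leq\dots\leq x_n$, which is exactly the definition of a semispectrum.

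Since the closure of the $\sim$-class is contained in (in fact equal to) $\{\vec{x}:A\vec{x}\geq 0\}$, every member of that closure is a semispectrum, which is the claim. There is no real obstacle here: the statement is an immediate corollary of the explicit list of defining inequalities of the cone, and the only thing to be careful about is that the rows $e_i^T$ and $p_{(i,j)}$ are genuinely among the rows of any matrix associated to $\vec{a}$ — which is guaranteed by the third and fourth bullets of that definition.
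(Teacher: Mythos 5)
Your argument is correct and is essentially the paper's own: the corollary follows from Lemma~\ref{simcones} because the closure is the solution set $\{\vec{x}: A\vec{x}\geq 0\}$, and the rows $e_i^T$ and $p_{(i,j)}$ force $0\leq x_1\leq\dots\leq x_n$, which is exactly the observation made inside the proof of that lemma. Nothing further is needed.
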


\begin{corollary}\label{scaleinvariant}
    If $\vec{x}$ is a spectra and $\alpha>0$, $\alpha \vec{x} \sim \vec{x}$.
\end{corollary}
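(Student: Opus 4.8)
The plan is to unwind the definition of $\sim$ directly. Fix a spectrum $\vec{x}=(x_1,\dots,x_n)$ and a real $\alpha>0$, and set $\vec{b}=\alpha\vec{x}$. First I would verify that $\vec{b}$ is again a spectrum: multiplication by $\alpha>0$ is a strictly increasing map on $\RR$, so the chain $0<x_1<\dots<x_n$ is carried to $0<\alpha x_1<\dots<\alpha x_n$, which is exactly the defining condition for a spectrum. This step uses strict positivity of $\alpha$ to preserve the ordering.

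Next, for an arbitrary (not necessarily distinct) triple $i,j,k\in\{1,\dots,n\}$, I would simply compute $b_i+b_j=\alpha(x_i+x_j)$ and $b_k=\alpha x_k$, so that the inequality $b_i+b_j\geq b_k$ becomes $\alpha(x_i+x_j)\geq \alpha x_k$; dividing by $\alpha>0$, this holds if and only if $x_i+x_j\geq x_k$. Since this equivalence is precisely what is required for $\vec{b}\sim\vec{x}$, and it holds for every triple, the proof is complete. Strict positivity of $\alpha$ is used a second time here, to preserve the direction of each triangle inequality.

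As an alternative route, one could instead invoke Lemma \ref{simcones}: the closure of the $\sim$-class of $\vec{x}$ is a convex cone, hence closed under scaling by any $\alpha\geq 0$, so $\alpha\vec{x}$ lies in that closure; combining this with the observation (made inside the proof of Lemma \ref{simcones}) that the spectra lying in the closure are exactly the spectra $\sim$-equivalent to $\vec{x}$, together with the fact just noted that $\alpha\vec{x}$ is itself a spectrum, one again concludes $\alpha\vec{x}\sim\vec{x}$. There is no genuine obstacle in either approach; the only point requiring any care is that the hypothesis $\alpha>0$ (as opposed to $\alpha\geq 0$) is what rules out the degenerate collapse $\alpha\vec{x}=\vec{0}$ and keeps all strict inequalities intact. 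I would present the direct computation as the main argument, as it is the shortest and most transparent.
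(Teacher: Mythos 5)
Your main argument is correct and complete: scaling by $\alpha>0$ preserves both the strict ordering $0<x_1<\dots<x_n$ and each inequality $x_i+x_j\geq x_k$ in both directions, which is exactly the definition of $\sim$. This is the ``well known fact that is easy to show'' the paper alludes to; the paper itself supplies no argument and instead presents the statement as a geometric corollary of Lemma~\ref{simcones} (closures of $\sim$-classes are cones, hence closed under nonnegative scaling). So your direct computation is, if anything, more self-contained than the paper's framing, and it is the right choice for the main proof.

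One caution about your alternative route: the claim you attribute to the proof of Lemma~\ref{simcones}, that ``the spectra lying in the closure are exactly the spectra $\sim$-equivalent to $\vec{x}$,'' is not what that proof establishes and is in fact false in general. A spectrum $\vec{b}$ with $A\vec{b}\geq 0$ can satisfy $b_i+b_j=b_k$ for a triple where $x_i+x_j<x_k$, so it lies in the closure without being equivalent to $\vec{x}$. What the lemma's proof actually asserts concerns the open set $\{\vec{z}: A\vec{z}>0\}$, and even that does not immediately finish the job, since $\vec{x}$ itself (hence $\alpha\vec{x}$) need not lie in that open set when some triple satisfies $x_i+x_j=x_k$ exactly. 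To repair the cone-based route you would have to observe that scaling by $\alpha>0$ preserves equalities as well as strict inequalities --- which is precisely your direct computation again. So the geometric detour buys a nice point of view but no shortcut; keep the direct argument as the proof.
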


The above corollary is already a well known fact that is easy to show. However, using that the closure of $\sim$-classes are cones provides an alternative geometric point of view. Surpisingly, a lot of insight into spectra can be deduced by using geometric reasoning. In the next section, we introduce \textit{polyhedra}. For all material on cones and polyhedra, we used \textit{Polyhedral Geometry and Linear Optimization} by Paffenholz as a reference \cite{Paffenholz}. We advise the curious reader to consult it as well for more on polyhedra and linear optimization. 
\section{Polyhedra and Bounds}

\subsection{Polyhedrons and Spectra}

As we've just seen, semispectra are classified completely by cones. However, our interests lie in integral spectra, so the class of semispectra is too broad. We mitigate this by instead considering \textit{polyhedrons}. Similar to the case of cones, polyhedrons naturally arise as the solution set to a family of linear inequalities. 

\begin{definition}
    A \textit{polyhedron} is a set of the form $\{x\in \RR^n : A\vec{x} \geq \vec{b} \}$ where $A:\RR^n \rightarrow \RR^m$ is a linear map.
\end{definition}

Phrased another way, a polyhedron is the solution set to a list of finitely many linear inequalities, and hence the intersection of finitely many half-spaces.

\begin{definition}
    Given a matrix $A$, we let $A_{i,*}$ denote the $i$th row. If $A$ has $m$-rows, and $I\subseteq [m]$, we let $A_{I,*}$ denote the submatrix consisting of the rows of index $I$. Similarly, in the case of a vector $\vec{b}$, we let $\vec{b}_I$ denote the subvector restricted to the indices in $I$. 
\end{definition}

\begin{definition}
    Given a polyhedron $P$ defined by the equation $A\vx \geq \vb$ where $A$ has $m$ rows, and $I\subseteq [m]$, a \textit{face} of a polyhedron is a set of the form $F= \{ \vec{x}\in P:  A_{I,*}\vec{x} = \vec{b}_I  \}$.
\end{definition}

If we interpret a polyhedron as an intersection of finitely many half-spaces, then faces are then the intersections of finitely many planes. The most significant faces are those that are \textit{minimal}. Especially in the case that the minimal face is zero dimensional.

\begin{definition}
    Given a polyhedron $P$ defined by the equation $A\vx \geq \vb$, we say that $\vec{x} \in P$ is a \textit{vertex} if provided the maximal $I$ such that $A_{I,*} \vec{x} = \vec{b}_I$, $A_{I,*}$ has rank $n$. If a vertex exists, we say $P$ is \textit{pointed}.
\end{definition}

\begin{theorem}[Folklore \cite{Paffenholz}]\label{polyfund}
    Given a nonempty polyhedron $P$ defined by the equation $A\vx \geq \vb$, the following are equivalent:
    \begin{itemize}
        \item $A$ has rank $n$.
        \item There is a vertex $\vec{v} \in P$.
    \end{itemize}
\end{theorem}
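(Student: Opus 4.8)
\textbf{Proof proposal.}

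The plan is to prove the two implications separately, relying on the definition of a vertex as a point $\vec{x}\in P$ for which the maximal index set $I$ with $A_{I,*}\vec{x}=\vec{b}_I$ yields $A_{I,*}$ of rank $n$. The easy direction is that a vertex forces $A$ to have rank $n$: if $\vec{v}$ is a vertex with witnessing index set $I$, then $A_{I,*}$ already has rank $n$, and since $A_{I,*}$ is a submatrix of $A$, the matrix $A$ has rank at least $n$; being an $m\times n$ matrix it has rank exactly $n$. So the whole content is in the converse.

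For the converse, suppose $A$ has rank $n$ and $P=\{\vec{x}: A\vec{x}\geq\vec{b}\}$ is nonempty; I want to produce a vertex. First I would argue that $P$ contains no line: if $\vec{x}+t\vec{d}\in P$ for all $t\in\RR$, then feeding $t\to+\infty$ and $t\to-\infty$ into each inequality $A_{i,*}(\vec{x}+t\vec{d})\geq b_i$ forces $A_{i,*}\vec{d}=0$ for every row $i$, hence $A\vec{d}=0$, and since $\rk A=n$ this gives $\vec{d}=0$. So $P$ is line-free. Next, pick any $\vec{x}_0\in P$ and let $I_0$ be the set of indices of inequalities that are tight at $\vec{x}_0$. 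If $A_{I_0,*}$ already has rank $n$ we are done. Otherwise its kernel is nontrivial; choose $0\neq\vec{d}\in\ker A_{I_0,*}$. Since $P$ is line-free, moving along $\pm\vec{d}$ from $\vec{x}_0$ must eventually violate some inequality not in $I_0$ (the inequalities in $I_0$ stay tight, hence valid), so there is a largest $t$ (in one of the two directions) with $\vec{x}_1:=\vec{x}_0+t\vec{d}\in P$, and at $\vec{x}_1$ a new inequality becomes tight. Its row is not in the span of $A_{I_0,*}$ — because $\vec{d}$ is in $\ker A_{I_0,*}$ but the new row $A_{j,*}$ satisfies $A_{j,*}\vec{d}\neq 0$ — so the rank of the tight submatrix strictly increases. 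Iterating this process at most $n$ times produces a point whose tight submatrix has rank $n$, i.e.\ a vertex.

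The main obstacle, and the step requiring the most care, is the rank-increase argument in the iteration: one must verify that the newly tightened constraint genuinely raises the rank rather than merely enlarging $I$, and that the old tight constraints remain tight along the chosen direction $\vec{d}$ (this is why $\vec{d}$ is taken in $\ker A_{I_0,*}$, not just anywhere). A secondary subtlety is the existence of a finite maximal $t$: this is exactly where line-freeness of $P$ is used, so the argument that $\rk A=n$ implies $P$ contains no line must be in place first. Everything else is bookkeeping with submatrix ranks, and since each step strictly increases a rank bounded by $n$, termination is automatic. If desired, one can instead phrase the converse as: minimize a generic linear functional that is bounded below on the line-free polyhedron $P$ and observe the minimizer is a vertex — but the constructive iteration above is cleaner and self-contained given the definitions already fixed in the excerpt.
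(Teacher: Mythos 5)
Your proposal is correct: the easy direction (a vertex's tight submatrix already has rank $n$) and the converse via line-freeness of $P$ plus the iterative rank-increasing argument (move along $\vec{d}\in\ker A_{I_0,*}$ until a new constraint with $A_{j,*}\vec{d}\neq 0$ becomes tight, which strictly raises the rank of the tight submatrix) is exactly the standard textbook proof. The paper itself does not prove this statement — it cites it as folklore from Paffenholz — and your argument is the same one found there, so nothing further is needed.
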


The above is a fundamental result of polyhedrons that tightly correlates geometry to linear optimization. Moreover, it says something rather significant. Namely, a nonempty intersection of a half space and a pointed polyhedron is a pointed polyhedron. 

\begin{definition}
    Let $\vec{a}$ be a metric spectrum. The \textit{spectral polyhedron corresponding to } $\vec{a}$, denoted $P(\vec{a})$, is the polyhedron defined by the equation $A\vec{x} \geq \vec{b}$ where
    \begin{align*}
        b_i &= \begin{cases}
                1 & \textnormal{if } A_{i,*} = -e_{j,k,l}^T  \textnormal{ or } e^T_k \textnormal{ or } p_{i,j} \textnormal{ where } i<j\\
                0 & \textnormal{otherwise}
                \end{cases}
    \end{align*}
    Hence, if $a_i+a_j \leq a_k$ and $\vec{x} \in P(\vec{a})$, then  $x_i+x_j \leq x_k-1  <x_k$, and the sequence is strictly increasing and starting in $x_1\geq1>0$.  
\end{definition}
We want strict inequalities to avoid having to work with semispectra. As it is written now, spectral polyhedrons have two unique properties. The first, is that they only contain spectra, not semispectra, by the first two lines in the previous definition. Next, they are also subsets of the associate $\sim$-class as we will see in Lemma \ref{simpoly}.
\begin{lemma}
    If $\vec{x}$ is a spectrum, then $P(\vec{x})$ is a non-empty polyhedron.
\end{lemma}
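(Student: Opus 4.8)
The plan is to show that $P(\vec{x})$ is nonempty by exhibiting an explicit point that satisfies all the defining inequalities. The natural candidate is a sufficiently stretched-out version of $\vec{x}$ itself: since equivalence and the metric-triple structure are scale invariant (Corollary \ref{scaleinvariant}), multiplying $\vec{x}$ by a large positive constant $\lambda$ does not change which triples are metric, but it creates room to absorb the ``$-1$'' slacks introduced in the definition of $\vec{b}$. So I would set $\vec{y} = \lambda \vec{x}$ for $\lambda$ large and verify each type of row of the associated matrix $A$ in turn.

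First I would handle the easy rows. The rows $e_i^T$ give $y_i \geq 1$, which holds for $\lambda$ large since $x_1 > 0$. The rows $p_{(i,j)}$ for $i<j$ give $y_j - y_i \geq 1$, which holds for $\lambda$ large since $x_i < x_j$ strictly, so $\lambda(x_j - x_i) \to \infty$. The interesting rows are the metric-triple rows. For a triple $(i,j,k)$ with $i \leq j < k$: if $x_i + x_j \geq x_k$, then the row is $e_{(i,j,k)}^T$ with right-hand side $0$, and we need $y_i + y_j - y_k = \lambda(x_i + x_j - x_k) \geq 0$, which holds for every $\lambda > 0$. If instead $x_i + x_j < x_k$, i.e. $x_k - x_i - x_j > 0$ strictly, the row is $-e_{(i,j,k)}^T$ with right-hand side $1$, and we need $y_k - y_i - y_j = \lambda(x_k - x_i - x_j) \geq 1$, which again holds for $\lambda$ large. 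Since there are only finitely many rows, a single $\lambda$ works for all of them simultaneously: take $\lambda$ to be the maximum over all required thresholds (each threshold is a finite positive number because every strict inequality among the $x$'s contributes a strictly positive gap).

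The main obstacle, such as it is, is purely bookkeeping: one must carefully match each row type in the definition of the associated matrix $A$ and of $\vec{b}$ to the corresponding inequality and check the direction of the inequality and the value of $b_i$ are consistent (in particular that the only rows with $b_i = 1$ are exactly the $-e_{(j,k,l)}^T$, $e_k^T$, and $p_{(i,j)}$ rows, and these are precisely the ones where the strict gaps of $\vec{x}$ provide slack). There is no genuine difficulty because the strictness built into the definition of a spectrum ($0 < x_1 < \dots < x_n$) is exactly what is needed to make every ``$\geq 1$'' constraint satisfiable after scaling. I would conclude by stating that $\lambda \vec{x} \in P(\vec{x})$ for all sufficiently large $\lambda$, hence $P(\vec{x}) \neq \emptyset$.
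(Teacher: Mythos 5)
Your proposal is correct and follows essentially the same route as the paper: both proofs exhibit a scaled copy $\lambda\vec{x}$ (the paper scales by the reciprocal of the smallest nonzero entry of $A\vec{x}$, you take $\lambda$ past the finitely many thresholds) and use the strict inequalities defining a spectrum to absorb the right-hand sides equal to $1$. No gap; the bookkeeping over the row types matches the paper's definition of $A$ and $\vec{b}$.
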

\begin{proof}
    We will prove that there is an $\alpha>0$ such that $\alpha \vec{x} \in P(\vec{x})$. This is equivalent to $\alpha(A\vec{x})=A\alpha \vec{x}\geq \vec{b}$. Now we observe $\vec{s}=A\vec{x}$. Since $\vec{x}$ it is easy to show that if $b_i>0$, then $s_i>0$ and since $\vec{s}=A\vec{x}\geq 0$ ,because every spectrum satisfies its own defining inequalities, we can always scale $\vec{s}$ by $1$ over its smallest non-zero component to get a vector larger than $\vec{b}$ which finishes the proof.
\end{proof}
\begin{lemma}\label{simpoly}
    Let $\vec{x}$ and $\vec{y}$ be metric specrta. $\vec{x} \sim \vec{y} $ if and only if $P(\vec{x}) = P(\vec{y})$.
\end{lemma}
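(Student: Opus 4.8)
The plan is to funnel both implications through a single combinatorial invariant. For a spectrum $\vec{a}\in\RR^n$, write $M(\vec{a})$ for the set of triples $(i,j,k)$ with $i\le j<k$ and $a_i+a_j\ge a_k$. The first step is to observe that, for spectra, $\vec{a}\sim\vec{b}$ holds if and only if $M(\vec{a})=M(\vec{b})$. Indeed, for a triple $(i,j,k)$ with $k\le\max(i,j)$ the inequality $a_i+a_j\ge a_k$ holds automatically because the entries are positive and increasing, and the predicate ``$a_i+a_j\ge a_k$'' is symmetric in $i$ and $j$; so agreeing on all triples in the sense of the definition of $\sim$ is the same as agreeing on the triples with $i\le j<k$, i.e.\ the same as $M(\vec{a})=M(\vec{b})$.

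The second step is to unwind $P(\vec{a})$ into an explicit inequality system: a vector $\vec{z}$ lies in $P(\vec{a})$ precisely when $z_i+z_j-z_k\ge 0$ for every $(i,j,k)\in M(\vec{a})$, $z_k-z_i-z_j\ge 1$ for every triple $i\le j<k$ with $(i,j,k)\notin M(\vec{a})$, $z_j-z_i\ge 1$ for all $i<j$, and $z_i\ge 1$ for all $i$. (The matrix associated to $\vec{a}$ is unique up to permuting its rows, so $P(\vec{a})$ as a solution set is well defined and equals the solution set of this system.) The crucial point is that this system depends on $\vec{a}$ \emph{only through} $M(\vec{a})$: the last two families of constraints are spectrum-independent, and the first two are dictated entirely by membership in $M(\vec{a})$.

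With these two observations the forward direction is immediate: if $\vec{x}\sim\vec{y}$ then $M(\vec{x})=M(\vec{y})$, so the defining systems of $P(\vec{x})$ and $P(\vec{y})$ coincide verbatim, whence $P(\vec{x})=P(\vec{y})$. For the converse I would first record that every $\vec{z}\in P(\vec{a})$ is itself a spectrum (the constraints $z_i\ge 1$ and $z_j-z_i\ge 1$ force $0<z_1<\dots<z_n$) and moreover satisfies $M(\vec{z})=M(\vec{a})$, because the constraints of $P(\vec{a})$ say exactly that $z_i+z_j\ge z_k$ on the triples of $M(\vec{a})$ and $z_i+z_j<z_k$ on the triples $i\le j<k$ outside $M(\vec{a})$. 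Hence $P(\vec{a})$ is contained in the $\sim$-class of $\vec{a}$. Now assume $P(\vec{x})=P(\vec{y})$; by the previous lemma this set is non-empty, so we may pick $\vec{z}$ in it. Then $\vec{z}\sim\vec{x}$ and $\vec{z}\sim\vec{y}$, and since $\sim$ is an equivalence relation we conclude $\vec{x}\sim\vec{y}$.

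I do not expect a genuine obstacle here. The two places that require care are the bookkeeping in the second step --- matching each row of the associated matrix with its correct right-hand side, so that ``$(i,j,k)\notin M(\vec{a})$'' really translates to a strict separation $z_k>z_i+z_j$ rather than a weak one --- and, in the converse direction, remembering that the argument needs $P(\vec{x})$ to be non-empty (the equivalence would fail for empty polyhedra), which is precisely what the preceding lemma provides.
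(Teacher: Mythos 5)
Your proof is correct and follows essentially the same route as the paper's: the forward direction uses that equivalent spectra yield the identical defining system, and the converse uses that $P(\vec{x})$ sits inside the $\sim$-class of $\vec{x}$, is non-empty by the preceding lemma, and then transitivity of $\sim$ through a common point. Your write-up simply makes explicit (via the invariant $M(\vec{a})$ and the row-by-row translation of the system) the bookkeeping the paper leaves implicit.
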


\begin{proof}
    First, note that if $\vec{x} \sim \vec{y} $, then the set of defining vectors $E$ from definition \ref{vectors} is identical for both $\vec{x}$ and $ \vec{y}$. Consequently,  $P(\vec{x})$ is a subset of the $\sim$ class of $\vec{x}$, and since $P(\vec{x})$ is non-empty, for the other direction take any $t\in P(\vec{x})=P(\vec{y})$, then $x\sim t\sim y$, so the result is immediate. 
\end{proof}

Spectral polyhedrons are significant in that they are pointed.

\begin{prop}\label{existsvertex}
    Given a nonempty spectral polyhedron $P = \{\vx \in \RR^n : A\vx \geq \vb \} $, there is a vertex $\vec{v}\in P$. 
\end{prop}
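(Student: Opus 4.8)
The goal is to show that a nonempty spectral polyhedron $P = \{\vx : A\vx \geq \vb\}$ has a vertex. By the folklore characterization in Theorem \ref{polyfund}, this is equivalent to showing that the matrix $A$ defining $P$ has rank $n$. So the entire content of the proof reduces to: the defining matrix $A$ of a spectral polyhedron has full column rank $n$.

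Here is how I would establish that. Recall from the definition of the matrix associated to $\vec a$ that the row set $E$ of $A$ includes, among other things, the transposed canonical basis vectors $e_1^T, \dots, e_n^T$. These $n$ rows alone already span $\RR^n$, hence the matrix $A$ has rank exactly $n$. That is literally all that is needed. So the plan is: (1) invoke the definition of the spectral polyhedron / associated matrix to observe that each $e_i^T$ appears as a row of $A$; (2) conclude $\rk(A) = n$; (3) apply Theorem \ref{polyfund} together with the hypothesis that $P$ is nonempty to produce the vertex $\vec v \in P$.

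I do not anticipate a serious obstacle here — the result is essentially bookkeeping, and the ``hard work'' was front-loaded into the construction of the spectral polyhedron (in particular, the deliberate inclusion of the rows $e_i^T$ and the $p_{(i,j)}$ and $-e_{(i,j,k)}^T$ rows was arranged precisely so that the defining system is rich enough to pin down a vertex). The only point requiring the slightest care is making sure the appeal to Theorem \ref{polyfund} is legitimate, i.e.\ that $P$ is genuinely given in the inequality form $A\vx \geq \vb$ with this same $A$, which is immediate from the definition of $P(\vec a)$; and that $P$ is nonempty, which is the standing hypothesis of the proposition (and was also shown in the preceding lemma for $P(\vec x)$ when $\vec x$ is a spectrum). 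One could alternatively give a direct argument: take any point of $P$, and repeatedly tighten inactive inequalities along directions in the lineality space until no nonzero direction remains, using that the $e_i^T$ rows force the lineality space to be trivial; but routing through Theorem \ref{polyfund} is cleaner and is exactly what the preceding development was set up to enable.
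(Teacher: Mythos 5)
Your proposal is correct and follows exactly the same route as the paper: invoke Theorem \ref{polyfund}, and observe that the rows $e_1^T,\dots,e_n^T$ forced into the associated matrix $A$ already give $\rk(A)=n$, so the nonempty polyhedron has a vertex. No gaps; the extra remarks about nonemptiness and the alternative lineality-space argument are fine but unnecessary.
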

\begin{proof}
    By Theorem \ref{polyfund}, it suffices to show that $A$ has rank $n$. Recall that the matrix $A$ must contain the vectors of the form $\vec{e}_i$ as rows. This family clearly has full rank and spans all of $\RR^n$ itself, hence regardless of the other rows in $A$, $A$ must have full rank.
\end{proof}

Notice the power above of Theorem \ref{polyfund}, we are able to deduce that the polyhedron is pointed as it is the intersection of a pointed polyhedron and half-spaces. This provides a fairly useful existence result. As a corollary to the above, we will show that every metric spectra admits a rational (hence integral) representative. This was shown by Conant using quantifier elimination \cite{ConantThesis}. We instead prove this using polyhedral geometry. The advantage of our approach is that it is of a computational nature, and hence provides a concrete representative opposed to only proving existence. Our ultimate result will share a lot in common with the below proof.
\begin{theorem}{(Conant \cite{ConantThesis})}
    Given a metric spectra $\vx \in \RR^n$, there is a spectra $\vec{y} \in \QQ^n $ such that $\vec{y}\sim \vx$.
\end{theorem}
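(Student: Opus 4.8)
The plan is to combine Proposition \ref{existsvertex} with Cramer's rule. By Lemma \ref{simpoly}, it suffices to produce a single rational point in the spectral polyhedron $P(\vx)$, since every member of $P(\vx)$ is $\sim$-equivalent to $\vx$. By Proposition \ref{existsvertex}, $P(\vx)$ has a vertex $\vec{v}$. First I would unpack the definition of a vertex: there is an index set $I \subseteq [m]$ with $A_{I,*}\vec{v} = \vec{b}_I$ and $\rank A_{I,*} = n$. Hence I can select $n$ rows from $A_{I,*}$ forming an invertible $n\times n$ submatrix $A'$, with corresponding subvector $\vec{b}'$, so that $A'\vec{v} = \vec{b}'$.

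The key point is that $A$ is an integer matrix: every defining row is of the form $\pm e_{(i,j,k)}^T = \pm(\vec{e}_i + \vec{e}_j - \vec{e}_k)$, $p_{(i,j)}$, or $\vec{e}_i^T$, all of which have entries in $\{-1,0,1\}$, and the vector $\vec{b}$ has entries in $\{0,1\}$. Therefore $A'$ is an invertible integer matrix and $\vec{b}'$ is an integer vector. Solving $A'\vec{v} = \vec{b}'$ by Cramer's rule gives $v_i = \Det(A'_i)/\Det(A')$ where $A'_i$ is $A'$ with its $i$th column replaced by $\vec{b}'$; since all these determinants are integers and $\Det(A') \neq 0$, we conclude $\vec{v} \in \QQ^n$. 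Finally, $\vec{v} \in P(\vx)$, so $\vec{v}$ is a spectrum (the defining inequalities force $0 < v_1 < \cdots < v_n$) and by Lemma \ref{simpoly} together with the remark that $P(\vx)$ lies inside the $\sim$-class of $\vx$, we get $\vec{v} \sim \vx$. Taking $\vec{y} = \vec{v}$ finishes the proof.

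I do not expect any serious obstacle here — the argument is essentially a bookkeeping exercise once one observes that the associated matrix is integral. The mildly delicate point is making sure the chosen point is genuinely a spectrum and not merely a semispectrum, but this is already handled by the construction of $P(\vx)$: the rows $\vec{e}_i^T$ and $p_{(i,j)}$ with their right-hand side $1$ force strict positivity and strict monotonicity, so every point of $P(\vx)$ — vertex or not — is automatically a spectrum. The real work, and the reason this proof is worth stating, comes later: to get the quantitative bounds $t_n \le 2^n$ one must control the \emph{size} of the determinants $\Det(A')$ and $\Det(A'_i)$ arising from Cramer's rule, which is precisely what the combinatorics of coverings in the next subsection is designed to do. Here we only need non-vanishing and integrality, so the present theorem is the ``soft'' prototype of the main result.
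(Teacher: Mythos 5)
Your proposal is correct and follows essentially the same route as the paper: take a vertex of the spectral polyhedron $P(\vx)$ (Proposition \ref{existsvertex}), note that the defining matrix and right-hand side are integral, and apply Cramer's rule to an invertible $n\times n$ subsystem to conclude rationality, with equivalence to $\vx$ coming from Lemma \ref{simpoly}. The extra remarks about strictness (spectrum versus semispectrum) and the later role of determinant bounds match the paper's own framing, so there is nothing to correct.
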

\begin{proof}
    Let $P$ denote the spectral polyhedron associated to $\vx$. Let $\vec{y}$ denote a vertex, which we know must exist by Proposition \ref{existsvertex}. We know $\vec{y}\sim \vx$ by Lemma \ref{simpoly}. Let $B = A_{I,*}$ where $|I| = n$, $A_{I,*} \vec{y} = \vec{b}_I$, and $A_{I,*} $ has rank $n$. By Cramer's rule, we have an exact computation for $\vec{y}$ by the formula
    \begin{align*}
    y_i &= \frac{\textnormal{det}(B_i) }{  \textnormal{det}(B) }    
    \end{align*}
    where $B_i$ is achieved by replacing column $i$ in $B$ with $\vec{b}_I$. Note that $B$ and $B_i$ (for all $i$) consist of integer entries. Hence, $y_i\in \QQ$. Thus, $\vec{y}\in \QQ^n$ which completes our proof. 
\end{proof}

\subsection{Covers and a Decomposition Lemma}

In order to later achieve bounds, we will need to better understand the combinatorics of matrices associated to metric spectra. We do this by constructing a partial order on $\RR^n$ that will help us develop bounds on determinants. Utilizing Cramer's rule as done previously in our proof of Conant's Theorem, we will compute well bounded integral representatives of $\sim$-classes.

\begin{definition}
    Given two vectors $\vec{x}$ and $\vec{y}$, we say $\vec{x} $ \textit{covers} $\vec{y} $, written $\vec{x} \preceq \vec{y}$, if for all $i$, $x_i y_i \geq 0 $ and $|x_i| \leq |y_i|$. We also say $S\subseteq \RR^n$ \textit{covers} $ \vec{x}$, $\vec{x} \preceq S $, if there is a $ \vec{y}\in S$ that covers $\vec{x}$.
\end{definition}

It is easy to verify that $\preceq$ is a partial order. Given the covering relation is determinant on sign of entries, we introduce the sign function.

\begin{definition}
    Given a nonzero $x\in \RR$, we say $\sgn(x) = 1$ if $x>0$, and $\sgn(x) = -1 $ otherwise. 
\end{definition}

We now introduce a covering decomposition theorem.

\begin{theorem}\label{decomp1}
    If $\vec{x}\preceq\vec{u}+\vec{v}$, there exist $\vec{x_u}\preceq\vec{u}$ and $\vec{x_v}\preceq\vec{v}$ such that $\vec{x}_u+\vec{x}_v=\vec{x}$.
\end{theorem}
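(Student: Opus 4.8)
The plan is to handle the problem coordinate by coordinate, since the covering relation $\preceq$ is defined entrywise. For each index $i$, I want to split $x_i$ into a sum $x_i = (x_u)_i + (x_v)_i$ where $(x_u)_i$ is "dominated by" $u_i$ and $(x_v)_i$ is "dominated by" $v_i$ in the sense required by $\preceq$, i.e. $(x_u)_i u_i \geq 0$ and $|(x_u)_i| \leq |u_i|$, and likewise for $v$. If I can do this for every $i$ independently, then assembling the coordinates gives vectors $\vec{x}_u \preceq \vec{u}$ and $\vec{x}_v \preceq \vec{v}$ with $\vec{x}_u + \vec{x}_v = \vec{x}$, which is exactly the claim.

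So fix $i$ and write $x = x_i$, $u = u_i$, $v = v_i$ for brevity. From $\vec{x} \preceq \vec{u} + \vec{v}$ we know $x(u+v) \geq 0$ and $|x| \leq |u+v|$. The key first step is to observe that, because $x(u+v)\geq 0$, the sign of $x$ is compatible with $u+v$; I would then split $u+v$ according to the signs of $u$ and $v$. If $u$ and $v$ have the same sign (or one of them is zero), then $|u+v| = |u| + |v|$, and I can "greedily" assign to $x_u$ as much of $x$ as fits inside $|u|$ with the correct sign, and the remainder to $x_v$: concretely, set $(x_u)_i = \sgn(x)\cdot\min(|x|,|u|)$ and $(x_v)_i = x - (x_u)_i$. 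One checks $|(x_v)_i| = |x| - \min(|x|,|u|) = \max(0,|x|-|u|) \leq |v|$ using $|x| \leq |u|+|v|$, and the sign conditions hold because everything inherits $\sgn(x)$, which agrees with $\sgn(u)$ and $\sgn(v)$ on the nonzero ones. If instead $u$ and $v$ have opposite signs, then $|u+v| = \big||u|-|v|\big|$; WLOG $|u|\geq|v|$, so $u+v$ has the sign of $u$, hence so does $x$, and $|x|\leq |u|-|v| \leq |u|$, so I can simply take $(x_u)_i = x$ and $(x_v)_i = 0$: then $(x_u)_i u_i = x u \geq 0$ and $|(x_u)_i| = |x| \leq |u|$, while $0 \preceq v_i$ trivially. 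The degenerate cases where $u_i = 0$ or $v_i = 0$ are absorbed into the "same sign" case.

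The main obstacle — though it is more of a bookkeeping nuisance than a genuine difficulty — is getting the sign conventions to line up cleanly in the opposite-sign case and making sure the $\min$/$\max$ manipulations in the same-sign case correctly use the hypothesis $|x|\leq|u+v|$ rather than some weaker bound. There is no global obstruction: the decomposition is purely local in the coordinates, and the triangle-type inequality $|x|\le|u+v|$ is exactly strong enough to make the greedy assignment fit. I would present the same-sign construction as the main case with an explicit formula, then dispatch the opposite-sign case in a sentence or two by the reduction above, and conclude by reassembling the coordinatewise choices into the desired $\vec{x}_u,\vec{x}_v$.
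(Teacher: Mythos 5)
Your proposal is correct and is essentially the paper's own argument: both proofs split coordinatewise, assigning in the same-sign (or zero) case the greedy amount $\pm\min(|x_i|,|u_i|)$ to $\vec{x}_u$ with the remainder to $\vec{x}_v$, and in the opposite-sign case sending all of $x_i$ to whichever of $u_i,v_i$ shares its sign and $0$ to the other, with the verification resting on $|x_i|\leq|u_i+v_i|$ exactly as you describe. The only differences are cosmetic (the paper writes $\sgn(u_i)$ rather than $\sgn(x_i)$ in the min formula and cases on $u_ix_i$ rather than on which of $|u_i|,|v_i|$ is larger), so no changes are needed.
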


\begin{proof}

      Let 

    \begin{align*}
        \vec{x}_u(i)=\begin{cases}
        0 & u_i\cdot v_i<0 \textnormal{ and } u_i\cdot x_i\leq0 \\
        x_i & u_i\cdot v_i<0 \textnormal{ and } u_i\cdot x_i> 0 \\
         \sgn(u_i)\cdot \textnormal{min}(|u_i|,|x_i|)   & \textnormal{otherwise}
    \end{cases}
    \end{align*}

Trivially we have $|\vec{x}_u(i)|\leq |u_i|$ in the first and last case. To see this is also true in the second case, suppose $u_i \cdot v_i < 0$ and $u_i \cdot x_i >0$. Since $\sgn(x_i) = \sgn(u_i)$ and $\sgn(u_i) \neq \sgn(v_i)$, $x_i \cdot v_i <0$. Since $\vec{x} \preceq \vec{u} + \vec{v} $, we acquire the following bounds 
\begin{align*}
    0 &\leq x_i\cdot (u_i+v_i) \\
    - x_i \cdot u_i&\leq x_i\cdot v_i <0 \\
    &\Rightarrow\\
    -1 &\geq -\frac{v_i}{u_i} >0\\
    &\Rightarrow \\
    \frac{|v_i|}{|u_i|} &\leq 1\\
    |v_i| &\leq |u_i|
\end{align*}
We also know that $|x_i| \leq |u_i +v_i|$. However, $u_i$ and $v_i$ have different signs, with $|u_i| \geq |v_i|$. Hence, $|x_i| \leq |u_i+v_i| \leq |u_i|$, and so $|\vec{x}_i(i)|\leq |u_i|$ is always true. It is even easier to see that $\vec{x}_u(i))u_i \geq 0$ is true for every $i$. Hence, $\vec{x}_u \preceq \vec{u}$.\\
\\
We now only have one choice for $\vec{x}_v$, namely $\vec{x} - \vec{x}_u$. We show now that $\vec{x}_v \preceq \vec{v}$. The first case is easy to verify. In this case, $\vec{x}_v(i) = x_i$, and $x_i$ has the same sign as $v_i$. Since $v_i$ has a different sign from $u_i$, similar to the previous argument, we have $|x_i|  \leq |v_i|$. The second case is even easier to verify as $\vec{x}_v(i) = 0$ in this case. We now check the more complicate case, the final one.\\
\\
In this case, $u_i$ and $v_i$ have the same sign. By our assumption, $x_i$ has the same sign as $u_i + v_i$. Thus, 
\begin{align*}
    |x_i -  \sgn(u_i)\cdot \textnormal{min}(|u_i|,|x_i|)| &= | |x_i| - |\textnormal{min}(|u_i|,|x_i|) | |
\end{align*}
If $|x_i| \leq |u_i|$, the above value is zero and so $|\vec{x}_v(i)| \leq |v_i| $ and has the same sign. In the case that $|x_i| > |u_i|$, the above quantity is $|x_i| - |u_i|$. But then, by triangle inequality
\begin{align*}
    |x_i| &\leq |u_i+v_i|\\
    &\leq |u_i| + |v_i|\\
    &\Rightarrow \\
    |x_i| - |u_i| &\leq |v_i|
\end{align*}
Hence, $\vec{x}_v \preceq \vec{v}$ as desired.

\end{proof}

 As it is not the case that the $\preceq$ order is algebraic (that is to say $\vec{x} \preceq $
$\vec{y}$ does not guarantee $\vec{x} + \vec{z} \preceq \vec{y} + \vec{z} $), we want a way to refine vectors to achieve some type of algebraicity in the $\preceq$ order. We show that this can be achieved when we restrict our attention to a simple (but relevant) family of vectors. 

\begin{definition}
    Let $\mathcal{F} = \{\vec{p}_{(i,j)}  : i,j \in \{1,...,n\} \; i\neq j \}\cup \{\vec{e}_i : i \in \{1,...,n\} \} \cup \{\vec{e}_i : i \in \{1,...,n\} \}  $ where $p_{(i,j)}$ are as they are defined in Definition \ref{vectors}.
\end{definition}

\begin{prop}\label{decomp2}
    Suppose $\vec{x}$ and $\vec{y}$ are such that $\vec{x} \preceq \vec{p}_1$ and $\vec{y} \preceq \vec{p}_2$ where $\vec{p}_1, \vec{p}_2 \in \mathcal{F} $. There are refinements $\vec{q}_1 \preceq \vec{p}_1$ and $\vec{q}_2 \preceq \vec{p}_2$ such that $\vec{x} + \vec{y} \preceq \vec{q}_1 + \vec{q}_2$ and $\vec{q}_1,\vec{q}_2 \in \mathcal{F} $. 
\end{prop}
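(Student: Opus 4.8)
The plan is to run a short, fully explicit case analysis driven by the very restricted shape of the vectors in $\mathcal{F}$: each has support of size at most two, all entries in $\{-1,0,1\}$, and whenever two entries are nonzero they are $+1$ and $-1$. First note that since $\vec{x}\preceq\vec{p}_1$, the support of $\vec{x}$ is contained in that of $\vec{p}_1$, with $\sgn(x_k)\in\{0,\sgn((\vec{p}_1)_k)\}$ and $|x_k|\le 1$ there; likewise for $\vec{y}$ and $\vec{p}_2$. The central reduction is that if $\vec{x}+\vec{y}\preceq\vec{p}_1+\vec{p}_2$ already, we simply take $\vec{q}_1=\vec{p}_1$, $\vec{q}_2=\vec{p}_2$. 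Inspecting coordinates one at a time, and using that on each support magnitudes are at most $1$, this inclusion can fail only at a coordinate $i$ lying in both supports on which $(\vec{p}_1)_i$ and $(\vec{p}_2)_i$ have opposite signs -- so $(\vec{p}_1+\vec{p}_2)_i=0$ -- while $x_i+y_i\ne 0$. Call such an $i$ \emph{critical}. Every critical coordinate lies in $\mathrm{supp}(\vec{p}_1)\cap\mathrm{supp}(\vec{p}_2)$, which has size at most two, so there are at most two of them; and if there are two, a direct inspection of signs shows $\vec{p}_1$ and $\vec{p}_2$ are both differences on the same pair $\{a,b\}$ with $\vec{p}_2=-\vec{p}_1$. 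So only two cases remain, and in each I would exhibit the refinement explicitly and then check $\vec{x}+\vec{y}\preceq\vec{q}_1+\vec{q}_2$ coordinate by coordinate.

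\textbf{Exactly one critical coordinate $i$.} The crucial estimate is $|x_i+y_i|=\bigl||x_i|-|y_i|\bigr|\le\max(|x_i|,|y_i|)\le 1$; moreover $x_i$ and $y_i$ have opposite signs, so $\sgn(x_i+y_i)$ equals $\sgn((\vec{p}_1)_i)$ when $|x_i|>|y_i|$ and $\sgn((\vec{p}_2)_i)$ when $|y_i|>|x_i|$ (equality is impossible since $x_i+y_i\ne 0$). Say the sign is $\sgn((\vec{p}_2)_i)$, the other subcase being symmetric. I would keep $\vec{q}_2=\vec{p}_2$ and let $\vec{q}_1$ be $\vec{p}_1$ with coordinate $i$ deleted: if $\vec{p}_1=\vec{p}_{(i,j)}$ is a difference this is $(\vec{p}_1)_j\,\vec{e}_j=\pm\vec{e}_j\in\mathcal{F}$, and if $\mathrm{supp}(\vec{p}_1)=\{i\}$ it is $\vec{q}_1=\vec{0}$ (using $\vec{0}\in\mathcal{F}$); either way $\vec{q}_1\preceq\vec{p}_1$. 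On coordinate $i$, $(\vec{q}_1+\vec{q}_2)_i=(\vec{p}_2)_i$ has the sign of $x_i+y_i$ and absolute value $1\ge|x_i+y_i|$; on the second coordinate $j$ of $\vec{p}_1$ (if any), either $j\notin\mathrm{supp}(\vec{p}_2)$, so $(\vec{q}_1+\vec{q}_2)_j=(\vec{p}_1)_j$ covers $x_j+y_j=x_j$, or $j\in\mathrm{supp}(\vec{p}_2)$, which (by the one-critical-coordinate hypothesis) forces $\vec{p}_2=-\vec{p}_1$ and $x_j+y_j=0$, while $(\vec{q}_1+\vec{q}_2)_j=(\vec{p}_1)_j+(\vec{p}_2)_j=0$; on the coordinates of $\vec{p}_2$ other than $i$, $\vec{q}_2=\vec{p}_2$ still covers $\vec{y}$; elsewhere everything vanishes. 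Hence $\vec{x}+\vec{y}\preceq\vec{q}_1+\vec{q}_2$.

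\textbf{Two critical coordinates.} Then $\vec{p}_1=\vec{p}_{(a,b)}$ and $\vec{p}_2=\vec{p}_{(b,a)}=-\vec{p}_1$, so $\vec{p}_1+\vec{p}_2=\vec{0}$ and $\vec{q}_1+\vec{q}_2$ must carry all of $\vec{x}+\vec{y}$. I would split on the sign pattern $\bigl(\sgn(x_a+y_a),\,\sgn(x_b+y_b)\bigr)$, whose entries lie in $[-1,1]$. If $\vec{x}+\vec{y}\preceq\vec{p}_1$ (i.e.\ the pattern is compatible with that of $\vec{p}_1$, zeros allowed), take $\vec{q}_1=\vec{p}_1$ and $\vec{q}_2=\vec{0}$; symmetrically if $\vec{x}+\vec{y}\preceq\vec{p}_2$. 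Otherwise the pattern is $(+,+)$ or $(-,-)$: in the former case take for $\vec{q}_1$ (resp.\ $\vec{q}_2$) the positive canonical unit vector at the coordinate where $\vec{p}_1$ (resp.\ $\vec{p}_2$) equals $+1$, and in the latter the negative unit vector at the coordinate where $\vec{p}_1$ (resp.\ $\vec{p}_2$) equals $-1$. In every subcase one checks directly from the explicit forms of $\vec{p}_1,\vec{p}_2$ that $\vec{q}_1\preceq\vec{p}_1$, $\vec{q}_2\preceq\vec{p}_2$, both lie in $\mathcal{F}$, and $\vec{q}_1+\vec{q}_2$ covers $\vec{x}+\vec{y}$ -- the last point because each of the two surviving coordinates of $\vec{x}+\vec{y}$ has absolute value at most $1$.

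The only real obstacle is the critical coordinates: there $\vec{p}_1+\vec{p}_2$ genuinely loses the room needed to dominate $\vec{x}+\vec{y}$, and the whole argument hinges on the elementary identity $|x_i+y_i|=\bigl||x_i|-|y_i|\bigr|\le 1$, which says a single $\pm 1$ suffices once we reroute that coordinate onto whichever of $\vec{p}_1,\vec{p}_2$ is sign-compatible there. The remainder is careful but routine coordinatewise bookkeeping, mainly verifying that the rerouting does not break the covering on the other coordinates; here we use that, outside the two-critical-coordinate configuration, the non-critical coordinates of the two supports do not interfere. Once a dominating sum is in hand, Theorem~\ref{decomp1} can be invoked to split $\vec{q}_1+\vec{q}_2$ back into covered pieces summing to $\vec{x}+\vec{y}$, though it is not needed to construct the refinements themselves.
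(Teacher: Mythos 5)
Your proposal is correct and, despite the different bookkeeping (a reduction to ``critical'' coordinates plus a short case analysis rather than a single coordinatewise formula), it amounts to the same construction as the paper's proof: in every case you zero out of $\vec{p}_1$ or $\vec{p}_2$ exactly the coordinate whose sign is incompatible with $\vec{z}=\vec{x}+\vec{y}$, which is precisely what the paper's three-case definition of $(\vec{q}_1(i),\vec{q}_2(i))$ produces, including your explicit choices $\pm\vec{e}_a,\pm\vec{e}_b$ in the two-critical-coordinate case. The one caveat you flag honestly --- that the refinement may be $\vec{0}$ (or $-\vec{e}_j$), which the stated definition of $\mathcal{F}$ does not literally contain --- is shared by the paper's own construction, which outputs the same vectors in the same configurations, so it is an imprecision in the definition of $\mathcal{F}$ rather than a gap in your argument.
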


\begin{proof}
Let $\vec{z} = \vec{x}+ \vec{y} $. We define $\vec{q}_1$ and $\vec{q}_2$ coordinatewise and simultaneously with the following equality:

\begin{align*}
    (\vec{q}_1(i),\vec{q}_2(i))=\begin{cases}
        (\vec{p}_1(i),\vec{p}_2(i))& \vec{p}_1(i)\vec{p}_2(i)\geq 0 \\
        (\vec{p}_1(i),0) & \vec{p}_1(i)\vec{z}(i)\geq 0\textnormal{ and } \vec{p}_1(i)\vec{p}_2(i)< 0\\
        (0,\vec{p}_2(i))&  \vec{p}_2(i)\vec{z}(i)\geq 0\textnormal{ and } \vec{p}_1(i) \vec{p}_2(i)<0\textnormal{ and } \vec{p}_1(i)\vec{z}(i)< 0
    \end{cases}
\end{align*}

We claim that this is a well defined notion, meaning the above three cases are exhaustive. To see this, fix some $i$. Suppose the first two conditions fails. Since $\vec{p}_1(i) \vec{p}_2(i)  <0 $ from the first condition, the second condition failing must mean that $\vec{p}_1(i) \vec{z}(i) <0 $. It suffices now to verify that $\vec{p}_2(i) \vec{z}(i) \geq 0$ to deduce the third condition. However, note that we have $\sgn( \vec{p}_2(i)) \neq \sgn(\vec{p}_1(i)) \neq \sgn(\vec{z}(i))$. Hence, $\sgn(\vec{p}_2(i)) = \sgn(\vec{z}(i))$, and so we are done. \\
\\
Given $\vec{q}_1(i)$ (resp. $\vec{q}_2(i)$) is either $\vec{p}_1(i)$ (resp. $\vec{p}_2(i)$) or $0$, we have that $\vec{q}_1\preceq \vec{p}_1$ and $\vec{q}_2 \preceq \vec{p}_2$. For the more challenging part of the proof, we must verify that $\vec{z} \preceq \vec{q}_1 + \vec{q}_2$. We do this again by cases. In the first case, $ \vec{p}_1(i)\vec{p}_2(i)\geq 0 $. Since $\vec{x} \preceq \vec{p}_1$ and $\vec{y}  \preceq \vec{p}_2$, $|\vec{x}(i)| \leq |\vec{p}_1(i)|$, and $|\vec{y}(i)| \leq |\vec{p}_2(i)| $. Since $\vec{p}_1(i)$ shares the same sign as $\vec{p}_2(i) $,
\begin{align*}
    |\vec{x}(i) + \vec{y}(i)| &\leq |\vec{p}_1(i) + \vec{p}_2(i)|\\
    &= |\vec{p}_1(i)|+ | \vec{p}_2(i)|\\
    &=|\vec{q}_1(i)|+ | \vec{q}_2(i)|
\end{align*}
We also have that
\begin{align*}
    \sgn(\vec{x}(i)) &= \sgn(\vec{p}_1(i)) \\
    &= \sgn(\vec{p}_1(i)+\vec{p}_2(i))\\
    &= \sgn(\vec{y}(i))\\
    &\Rightarrow\\
    \sgn(\vec{z}(i)) &= \sgn(\vec{q}_1(i)) + \sgn({q}_2(i))
\end{align*}
We now go to the second case. In this case, we have 
\begin{align*}
    |\vec{z}(i)| &= \sgn(\vec{z}(i)) \vec{z}(i)\\
    &= \sgn(\vec{p}_1(i)) \vec{z}(i)\\
    &= \sgn(\vec{p}_1(i)) \vec{p}_1(i) + \sgn(\vec{p}_1(i)) \vec{p}_2(i)\\
    &= | \vec{p}_1(i)| +\sgn(\vec{p}_1(i)) \vec{p}_2(i)
\end{align*}
By our hypotheses, $\vec{p}_1(i)$ and $ \vec{p}_2(i)$ have different signs. Hence,

\begin{align*} 
     |\vec{z}(i)| &\leq | \vec{p}_1(i)| = |\vec{q}_1(i)+\vec{q}_2(i)|
\end{align*}
We also have that $\vec{q}_1(i)+\vec{q}_2(i)$ has the same sign as $ \vec{p}_1(i)$, and hence the same sign as $\vec{z}(i)$. This completes the second case. The third case is near identical to the second case, and so we will skip its proof out of the interest of brevity. It should also be immediate from how we defined $\vec{q}_1$ and $\vec{q}_2$ that in every case, $\vec{q}_1,\vec{q}_2 \in \mathcal{F}$.
\end{proof}

We conclude this section with a corollary that will be foundational to our computations. 

\begin{corollary}
    If $ \vec{x} \preceq \mathcal{F} $ and $\vec{x} \preceq \mathcal{F} $, then $\vec{x}+\vec{y} \preceq \mathcal{F} + \mathcal{F} $. 
\end{corollary}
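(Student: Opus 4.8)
The plan is to read this corollary as the immediate ``$\preceq$-by-a-set'' reformulation of Proposition \ref{decomp2}. I first note the evident typo: the hypothesis should read ``$\vec{x} \preceq \mathcal{F}$ and $\vec{y} \preceq \mathcal{F}$'', and $\mathcal{F} + \mathcal{F}$ is to be understood as the set $\{\vec{q}_1 + \vec{q}_2 : \vec{q}_1, \vec{q}_2 \in \mathcal{F}\}$. With those conventions in place, the proof is essentially a two-line unfolding of definitions.

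The first step is to unpack the covering-by-a-set relation: by the definition of $\preceq \mathcal{F}$, there exist witnesses $\vec{p}_1, \vec{p}_2 \in \mathcal{F}$ with $\vec{x} \preceq \vec{p}_1$ and $\vec{y} \preceq \vec{p}_2$. The second step is to feed these exact witnesses into Proposition \ref{decomp2}, which yields refinements $\vec{q}_1 \preceq \vec{p}_1$ and $\vec{q}_2 \preceq \vec{p}_2$ with $\vec{q}_1, \vec{q}_2 \in \mathcal{F}$ and $\vec{x} + \vec{y} \preceq \vec{q}_1 + \vec{q}_2$. The third step is to observe that $\vec{q}_1 + \vec{q}_2 \in \mathcal{F} + \mathcal{F}$ by definition of the sum set, so that $\vec{x} + \vec{y}$ is covered by an element of $\mathcal{F} + \mathcal{F}$, i.e.\ $\vec{x} + \vec{y} \preceq \mathcal{F} + \mathcal{F}$. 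This finishes the argument.

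There is no genuine obstacle here: the entire combinatorial content — the coordinatewise case analysis guaranteeing that the refined vectors remain in $\mathcal{F}$ while still dominating $\vec{x} + \vec{y}$ — was already discharged in the proof of Proposition \ref{decomp2}, so the corollary is purely a notational repackaging. If it is useful downstream, one can also record the iterated form: if $\vec{x}_1, \dots, \vec{x}_m$ each satisfy $\vec{x}_\ell \preceq \mathcal{F}$, then $\vec{x}_1 + \cdots + \vec{x}_m \preceq \underbrace{\mathcal{F} + \cdots + \mathcal{F}}_{m}$. This iteration, however, requires a little more care, since after one application the vector $\vec{x}_1 + \vec{x}_2$ is dominated by a \emph{sum} $\vec{q}_1 + \vec{q}_2$ rather than a single element of $\mathcal{F}$; the clean way to proceed is to invoke Theorem \ref{decomp1} to split the running sum against the new summand and then re-apply Proposition \ref{decomp2} coordinate block by coordinate block, inducting on $m$. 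For the statement as written, though, only the single application of Proposition \ref{decomp2} above is needed.
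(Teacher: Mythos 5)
Your argument is correct as a formal deduction, but it is not the route the paper takes, and the difference is worth noting. You treat the corollary as an immediate unfolding of Proposition \ref{decomp2}, leaning on the final clause of its statement that the refinements satisfy $\vec{q}_1,\vec{q}_2\in\mathcal{F}$, so that $\vec{q}_1+\vec{q}_2\in\mathcal{F}+\mathcal{F}$ by definition of the sum set. The paper does not trust that clause: in Proposition \ref{decomp2} it is only asserted as ``immediate,'' and in fact it is fragile, since the coordinatewise refinement can zero out a nonzero entry of $\vec{p}_1$ or $\vec{p}_2$, producing vectors such as $-\vec{e}_i$ (which lies in $\mathcal{F}$ only under the charitable reading that the duplicated $\{\vec{e}_i\}$ in the definition of $\mathcal{F}$ was meant to be $\{-\vec{e}_i\}$) or even the zero vector (e.g.\ when $\vec{p}_1=\vec{e}_k$ and the disagreeing coordinate $k$ is zeroed), which is not in $\mathcal{F}$ at all. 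Accordingly, the paper's proof of the corollary redoes a sign case analysis on the coordinates where $\vec{p}_1$ and $\vec{p}_2$ disagree and exhibits $\vec{q}_1+\vec{q}_2$ explicitly as a sum of two members of $\mathcal{F}$ (of the form $\vec{p}_{(i,j)}-\vec{e}_k$ or $\vec{p}_{(i,k)}+\vec{e}_j$), thereby establishing $\vec{x}+\vec{y}\preceq\mathcal{F}+\mathcal{F}$ without relying on the membership of the refinements themselves. So your proof is valid relative to the proposition as stated and is the cleaner reading of what a corollary should be, but it silently inherits the weak point of Proposition \ref{decomp2}; the paper's longer argument is precisely the patch for those corner cases, and if you wanted your version to be airtight you would either fix the statement and proof of the membership clause in Proposition \ref{decomp2} (including the zero-vector case, say by allowing $\vec{0}$ and $-\vec{e}_i$ in $\mathcal{F}$ or by re-expressing degenerate sums) or fold in the same re-decomposition step the paper performs. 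Your closing remark about iterating to $m$ summands is a reasonable aside, and you are right that it needs Theorem \ref{decomp1} or an induction beyond a single application of Proposition \ref{decomp2}, but it is not needed for the statement at hand.
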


\begin{proof}
    Take $\vec{p}_1,\vec{p}_2 \in \mathcal{F} $ such that $\vec{x}\preceq \vec{p}_1$, and $\vec{y}\preceq \vec{p}_2 $. Let $\vec{z}=\vec{x}+\vec{y}$. By the construction in Theorem \ref{decomp2}, we have a pair $\vec{q}_1 \preceq \vec{p}_1 $ and $\vec{q}_2 \preceq \vec{p}_2$ such that $\vec{x}+\vec{y} \preceq \vec{q}_1 + \vec{q}_2$.  \\
    \\
    \textbf{Case 1:} Suppose that for every $i$, $\sgn(\vec{p}_1(i)) = \sgn(\vec{p}_2(i))$. By our definitions for $\vec{q}_1$ and $\vec{q}_2$, $\vec{q}_1=\vec{p}_1$ and $ \vec{q}_2 = \vec{p}_2$. Hence, we are done.\\
    \\
    \textbf{Case 2:} There is an $i$ such that $\sgn(\vec{p}_1(i)) \neq \sgn(\vec{p}_2(i))$. The only way this can happen is if one of the values is $1$, and the other is $-1$. Moreover, since each vector contains exactly one $1$ and one $-1$, there is a unique $i$ for which this occurs. Without loss of generality, suppose $\vec{p}_1(i) = -1$ and $\vec{p}_2(i) = 1$. \\
    \\
    \textbf{Subcase 1:} If $\vec{p}_1(i) \vec{z}(i) \geq 0$, then $\vec{z}(i) \leq 0$. It follows that $\vec{q}_1(i) + \vec{q}_2(i) = -1$. We now know that $\vec{q}_1+\vec{q}_2$ is a vector with a positive $1$ followed by two $-1$'s, and zero elsewhere eg. $[...1...-1...-1...] $. It is clear that this can be expressed as the sum of two vectors in $ \mathcal{F}$, namely $\vec{p}_{(i,j)} - \vec{e}_k $ where $i<j<k$ are the locations of the nonzero entries. \\
    \\
    \textbf{Subcase 2:} If $\vec{p}_2(i) \vec{z}(i) \geq 0$, then $\vec{z}(i) \leq 0$. It follows that $\vec{q}_1(i) + \vec{q}_2(i) = 1$. Similar to the previous subcase, we now know that $\vec{q}_1 + \vec{q}_2$ is a vector with two positive $1$'s followed by a $-1$, and zero elsewhere eg. $[...,1,...,1,...,-1,...] $. This can be expressed as the sum of two vectors in $\mathcal{F}$, namely $\vec{p}_{(i,k)} + \vec{e}_j$ where $i<j<k$ are the locations of the nonzero entries.
\end{proof}

\subsection{Bounds}

We now get in to computing upper bounds. As we've alluded to up to this point, representatives will be vertices of a polygon that are computed using Cramer's rule. To achieve bounds, we inductively show our matrices will be reasonably covered by $\mathcal{F}+\mathcal{F}$. Given we would like to implement row operations on matrices to simplify calculations, we prove the following lemma.

\begin{lemma}\label{rowoplemma}
Let $\vec{x},\vec{y}\preceq \mathcal{F}$ and $j$ minimal such that $0\neq|x_j|= ||\vec{x}||_\infty $. Then $\vec{y}-\frac{y_j}{x_j}\vec{x}\preceq \mathcal{F}$.    
\end{lemma}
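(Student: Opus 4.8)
The plan is to leverage the rigidity of $\mathcal{F}$: every element of $\mathcal{F}$ has support of size at most two and entries in $\{-1,0,1\}$, so any vector $\vec{v}$ with $\|\vec{v}\|_\infty\le 1$, support of size at most two, and no two coordinates of a common nonzero sign is automatically covered by $\mathcal{F}$ — the zero vector by $\vec{e}_1$, a single nonzero coordinate $t\vec{e}_m$ by $\vec{e}_m$ when $t>0$ and by some $\vec{p}_{(m,m')}$ when $t<0$, and two opposite-signed coordinates by the matching $\vec{p}$-vector. Thus the lemma reduces to verifying these three properties for $\vec{z}:=\vec{y}-\tfrac{y_j}{x_j}\vec{x}$.

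First I would clear away the degenerate cases. If $\vec{x}=\vec{0}$ there is no index $j$ as in the statement, so assume $\vec{x}\ne\vec{0}$; fixing $\vec{p}_1\in\mathcal{F}$ with $\vec{x}\preceq\vec{p}_1$, we get $\|\vec{x}\|_\infty=|x_j|\in(0,1]$. If $y_j=0$ then $\vec{z}=\vec{y}\preceq\mathcal{F}$ and we are done, so assume $y_j\ne 0$ and fix $\vec{p}_2\in\mathcal{F}$ with $\vec{y}\preceq\vec{p}_2$; since $y_j\ne 0$ we have $j\in\operatorname{supp}(\vec{y})\subseteq\operatorname{supp}(\vec{p}_2)$.

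Next I would split on the structure of $\vec{x}$. If $\operatorname{supp}(\vec{x})=\{j\}$ then $\vec{x}=x_j\vec{e}_j$, so $\vec{z}=\vec{y}-y_j\vec{e}_j$ is simply $\vec{y}$ with its $j$-th coordinate deleted, and this is still $\preceq\vec{p}_2$. Otherwise $\vec{x}$ has a second nonzero coordinate, and since $\vec{p}_1$ has support of size at most two, $\vec{p}_1$ must be some $\vec{p}_{(\cdot,\cdot)}$ with $\operatorname{supp}(\vec{p}_1)=\operatorname{supp}(\vec{x})=\{j,j'\}$; in particular $x_j$ and $x_{j'}$ have opposite signs and $|x_{j'}|\le|x_j|$, so $r:=x_{j'}/x_j\in[-1,0)$. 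A direct computation gives $z_j=0$, $z_{j'}=y_{j'}-y_j r$, and $z_i=y_i$ for all other $i$. Since $j\in\operatorname{supp}(\vec{y})$ and $|\operatorname{supp}(\vec{y})|\le 2$, $\vec{z}$ has support of size at most two. To control $|z_{j'}|$ and the signs I would split once more: if $j'\notin\operatorname{supp}(\vec{y})$ then $z_{j'}=-y_j r$, so $|z_{j'}|=|y_j|\,|r|\le 1$ and $\sgn(z_{j'})=\sgn(y_j)$ (as $r<0$), while for the remaining coordinate $m$ of $\vec{y}$, if present, $\vec{p}_2=\pm\vec{p}_{(j,m)}$ forces $z_m=y_m$ to have sign opposite to $y_j$; if instead $j'\in\operatorname{supp}(\vec{y})$ then $\operatorname{supp}(\vec{p}_2)=\{j,j'\}$, so $\vec{p}_2=\pm\vec{p}_{(j,j')}$, meaning $y_j$ and $y_{j'}$ have opposite signs, hence $y_{j'}$ and $y_j r$ have the same sign and $|z_{j'}|=\big|\,|y_{j'}|-|y_j r|\,\big|\le 1$, while $\operatorname{supp}(\vec{z})\subseteq\{j'\}$ is a single coordinate. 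Either way the three properties hold, so $\vec{z}\preceq\mathcal{F}$.

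The crux, and the only place the argument could go wrong, is the requirement that $\vec{z}$ has no two coordinates of a common nonzero sign, since that sign pattern is realized by no element of $\mathcal{F}$: it works out because the sole genuinely new coordinate $z_{j'}$ inherits the sign of $y_j$ (by $r<0$), while $\vec{p}_2$ — forced to be a $\vec{p}$-vector as soon as $\vec{y}$ has two nonzero coordinates — guarantees the surviving coordinate of $\vec{y}$ has the opposite sign. The only bookkeeping subtlety is that covering a lone negative coordinate requires some $\vec{p}_{(m,m')}$, which is harmless because that situation only arises in the branch where $\vec{p}_1$ is itself a $\vec{p}$-vector and hence $n\ge 2$.
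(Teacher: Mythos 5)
Your proof is correct and follows essentially the same route as the paper's: split on whether $\vec{x}$ has one or two nonzero coordinates, note that the $j$-th coordinate of $\vec{z}$ vanishes, and track signs and magnitudes of the remaining (at most two) coordinates to exhibit a covering element of $\mathcal{F}$. Your version is in fact slightly more careful than the paper's, making explicit the characterization of vectors covered by $\mathcal{F}$ and the edge cases ($y_j=0$, and the need for $n\ge 2$ to cover a lone negative coordinate), but the underlying argument is the same.
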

\begin{proof}
    Since $\vec{x}$ and $ \vec{y}$ are covered by $\mathcal{F}$, both vectors contain at most two nonzero entries, and at most one entry of each sign. \\
\\
\textbf{Case 1:} The vector $\vec{x}$ only contains one nonzero value. In this case, it is precisely $x_j$. It follows that $\vec{y} - \frac{y_j}{x_j}\vec{x} $ is simply $\vec{y}$ with the $j$th coordinate turned to $0$. It follows then that $\vec{y} - \frac{y_j}{x_j}\vec{x}$ has at most one nonzero term of cardinality bounded by $1$, and hence is still covered by $\mathcal{F}$.
\\
\\
\textbf{Case 2:} The vector $ \vec{x}$ contains two nonzero values. Let $k$ be the index of the nonzero value that isn't $j$. 
\begin{align*}
    (\vec{y} - \frac{y_j}{x_j}\vec{x})(i) &= 
    \begin{cases}
        0 & i=j \\
        y_k - \frac{x_k}{x_j} y_j & i=k \\
        y_i & \textnormal{otherwise.}
    \end{cases}
\end{align*}
It follows that $\vec{y} - \frac{y_j}{x_j}\vec{x}$ still has at most two nonzero coordinates. It will have precisely two nonzero coordinates in the case that $y_k=0$.\\
\\
\textbf{Subcase 1:} Suppose $y_k=0$. It suffices to show that the $k$th coordinate has the same sign as the coordinate that had been turned to $0$ ($j$) and has cardinality bounded by $1$. First, we check the sign. Recall that $\sgn(x_j) \neq \sgn(x_k)$. So, $\sgn( - \frac{x_k}{x_j} y_j) = \sgn(y_j)$. Given $|\frac{x_k}{x_j}| \leq 1$ and $|y_j| \leq 1$, $|- \frac{x_k}{x_j} y_j| \leq 1$ as desired. 
\\
\\
\textbf{Subcase 2:} Suppose $y_k \neq 0$. It follows that $ \vec{y} - \frac{y_j}{x_j}\vec{x}$ only has one nonzero entry. It suffices then to show it is bounded in absolute value by $1$. Note that $\sgn(y_k) \neq \sgn(y_j)$. Moreover, as we saw in the last subcase, $\sgn( - \frac{x_k}{x_j} y_j) = \sgn(y_j)$. It follows then that 
\begin{align*}
     |y_k - \frac{y_j}{x_j}x_k| &= \textnormal{max} \{y_k - \frac{y_j}{x_j}x_k, \frac{y_j}{x_j}x_k -y_k  \}\\
     &\leq \textnormal{max}\{y_k, \frac{y_j}{x_j}x_k  \}\\
     &\leq 1
\end{align*}
This completes this subcase, and thus our proof. 
\end{proof}

\begin{theorem}\label{detbound}
       If $A$ is an $n\times n$ matrix with the property that every row is covered by $\mathcal{F} + \mathcal{F}$, then $|\Det A|\leq 2^{n}$. Moreover, whenever an $n\times n$ matrix $A$ has $m\geq0$ rows covered by $\mathcal{F}$, then $|\Det A|\leq 2^{n-m}$.
\end{theorem}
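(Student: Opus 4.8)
The plan is to isolate the single hard case and then bootstrap. Note first that the ``moreover'' clause contains the first statement (take $m=0$), so it suffices to prove: if $A$ is $n\times n$, every row of $A$ is covered by $\mathcal{F}+\mathcal{F}$, and $m$ of the rows are covered by $\mathcal{F}$, then $|\Det A|\le 2^{n-m}$. I would first settle the extreme case $m=n$ (every row covered by $\mathcal{F}$, conclusion $|\Det A|\le 1$) by an induction that performs one step of Gaussian elimination per level and uses Lemma \ref{rowoplemma} to keep the covering hypothesis alive; then I would reduce an arbitrary $m$ to this case by splitting the $n-m$ ``bad'' rows via Theorem \ref{decomp1} and expanding the determinant multilinearly.

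For the case in which every row is covered by $\mathcal{F}$, argue by induction on $n$. If every row is $\vec 0$ then $\Det A=0$. Otherwise pick a nonzero row $\vec x$ and let $j$ be minimal with $|x_j|=\|\vec x\|_\infty$; then $x_j\neq 0$ and $|x_j|\le 1$ since $\vec x\preceq\mathcal{F}$ forces all entries of $\vec x$ into $[-1,1]$. Replace every other row $\vec y$ by $\vec y-\tfrac{y_j}{x_j}\vec x$; this leaves the determinant unchanged, and by Lemma \ref{rowoplemma} each new row is still covered by $\mathcal{F}$, while column $j$ of the new matrix now has its unique nonzero entry in the row $\vec x$. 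Expanding along column $j$ gives $|\Det A|=|x_j|\cdot|\Det A'|$, where $A'$ is the $(n-1)\times(n-1)$ matrix obtained by deleting the row $\vec x$ and the column $j$. Deleting a coordinate from a vector covered by $\mathcal{F}$ preserves the defining property ``at most one positive entry, at most one negative entry, all of absolute value at most $1$,'' hence keeps the vector covered by some $\vec e_i$ or $\vec p_{(i,k)}$ in $\mathcal{F}$; so $A'$ meets the induction hypothesis, $|\Det A'|\le 1$, and therefore $|\Det A|\le 1$.

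Now take a general $A$ with $m$ rows covered by $\mathcal{F}$ (and all rows covered by $\mathcal{F}+\mathcal{F}$). Designate $m$ rows covered by $\mathcal{F}$ as kept. Every other row $\vec r$ satisfies $\vec r\preceq\vec p_1+\vec p_2$ for some $\vec p_1,\vec p_2\in\mathcal{F}$, so Theorem \ref{decomp1} yields $\vec r=\vec r'+\vec r''$ with $\vec r'\preceq\vec p_1\in\mathcal{F}$ and $\vec r''\preceq\vec p_2\in\mathcal{F}$. Expanding $\Det A$ multilinearly in these $n-m$ rows writes it as a sum of $2^{n-m}$ determinants, and in each summand every row is covered by $\mathcal{F}$ — the $m$ kept rows, together with one of $\vec r',\vec r''$ for each remaining row. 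By the previous paragraph each of these determinants has absolute value at most $1$, so $|\Det A|\le 2^{n-m}$.

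The step I expect to be the main obstacle is the bound $|\Det A|\le 1$ when all rows are covered by $\mathcal{F}$: a naive Laplace expansion along a row covered by $\mathcal{F}$ only gives $|\Det A|\le 2\max|\text{minor}|$, since such a row can have two nonzero entries each of size up to $1$, and iterating this loses a factor $2$ at every level and proves nothing. The point of clearing a column by one elimination step first — licensed precisely by Lemma \ref{rowoplemma}, which is where the special shape of the vectors in $\mathcal{F}$ (at most one $+1$ and one $-1$), rather than mere sparseness, is used — is that only a single minor then survives the expansion. The remaining ingredients, namely that coordinate deletion preserves being covered by $\mathcal{F}$ and the counting of the $2^{n-m}$ determinants, are routine.
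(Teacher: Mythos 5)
Your proposal is correct, and it proves the stated bound, but the architecture is genuinely different from the paper's. The paper runs a single induction on $n$: it takes the first row $\vec{r}_1$ (whether it is covered by $\mathcal{F}$ or only by $\mathcal{F}+\mathcal{F}$), clears the column of its largest entry by the operation of Lemma \ref{rowoplemma}, Laplace-expands along that column, and picks up a factor $\|\vec{r}_1\|_\infty\leq 1$ or $\leq 2$ according to the two cases, tracking the count $m$ through the minor. You instead isolate the extreme case in which every row is covered by $\mathcal{F}$, prove $|\Det A|\leq 1$ there by the same elimination-plus-single-minor idea, and then reduce the general case to it by splitting each of the $n-m$ rows covered only by $\mathcal{F}+\mathcal{F}$ as $\vec{r}=\vec{r}'+\vec{r}''$ via Theorem \ref{decomp1} and expanding the determinant multilinearly into $2^{n-m}$ determinants, each with all rows covered by $\mathcal{F}$. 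Both steps of yours check out: in the elimination step all rows are $\mathcal{F}$-covered, so Lemma \ref{rowoplemma} applies exactly under its stated hypotheses, and deleting the cleared coordinate indeed preserves the property ``at most one positive and one negative entry, all of modulus at most $1$''; the multilinear expansion and the count of $2^{n-m}$ terms are routine. What your route buys is notable: the paper's induction needs the elimination step to preserve $\mathcal{F}+\mathcal{F}$-covering for rows that are not $\mathcal{F}$-covered (and with a pivot row that may itself only be $\mathcal{F}+\mathcal{F}$-covered), which Lemma \ref{rowoplemma} as stated does not literally supply, whereas you only ever invoke the lemma in its proven form; your argument also gives Theorem \ref{decomp1} an explicit role in the bound, which the paper's own proof of Theorem \ref{detbound} never uses. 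What the paper's version buys in exchange is a single compact induction with no exponential-size expansion, the factor $2^{n-m}$ emerging directly from the bound $\|\vec{r}_1\|_\infty\leq 2$ on the non-$\mathcal{F}$ rows.
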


\begin{proof}
        We proceed by induction on $n$. It is clear that the statement is true for $n=1$. Suppose now that the statement is true for any $m>0$ and $n^\prime <n$. We prove the statement is true for $n$. Let $A$ be an $n$ by $n$ matrix with rows $\vec{r}_i\preceq \mathcal{F}+ \mathcal{F}$, and $m$ rows covered by $\mathcal{F}$ for some $m$. There is nothing to verify in the case that a row is the zero vector, so suppose every row is nonzero. \\
        \\
        Using the first row $\vec{r}_1$, apply to each row the operation from Lemma \ref{rowoplemma} to return a new matrix $A^\prime$. Since we performed scale row operations, we have $\Det A = \Det A^\prime $. We also have by Lemma \ref{rowoplemma} that every row of $A^\prime$ is covered by $\mathcal{F}+\mathcal{F}$. Let $j$ be the smallest integer such that $\vec{r}_1(j) = ||\vec{r}_1||_\infty $. By our matrix operation, we have that the $j$th column is only nonzero in the first entry. Hence, to compute the determinant of $A^\prime$, we apply the Laplace expansion along the $j$th column. It follows that $\Det A^\prime = ||\vec{r}_1||_\infty A^{\prime \prime} $, where $A^{\prime \prime}$ is $A$ with the first row and $j$th column removed. This new matrix minor is $n-1$ by $n-1$ and satisfies that each row is covered by $\mathcal{F}+\mathcal{F}$. We now split into cases.\\
        \\
        \textbf{Case 1:} $\vec{r}_1 \preceq \mathcal{F}$. It follows then that $||\vec{r}_1|| \leq 1$, and $A^\prime$ has at most $m-1$ rows covered by $\mathcal{F} $. By our induction hypothesis, $|\Det A^{\prime \prime}| \leq 2^{n-1 - (m-1)} = 2^{n-m}$. Hence, $|\Det A| \leq ||\vec{r}_1||_\infty \cdot 2^{n-m} \leq 2^{n-m} $.\\
        \\
        \textbf{Case 2:} $\vec{r}_1 $ cannot be covered by $\mathcal{F}$. It follows that $A^{\prime \prime}$ has $m$ rows that can be covered by $\mathcal{F}$, so $|\Det A^{\prime \prime}| \leq 2^{n-1 - m}$. A simple application of the triangle inequality yields that $||\vec{r}_1 ||_\infty \leq 2$. Hence, $|\Det A| \leq 2 \cdot 2^{n-1-m} = 2^{n-m}$.\\
        \\
        This completes our proof. 
\end{proof}

We can now prove the first part of our main result, showing that every spectra admits an integral $\sim$-class representative that is bounded by $2^n$.

\begin{theorem}\label{upperbound}
    Given a metric spectra $\vx$, there is an integral spectrum $\vec{y} \sim \vx$ such that $y_n \leq 2^n$. 
\end{theorem}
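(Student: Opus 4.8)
The plan is to combine the vertex existence result (Proposition \ref{existsvertex}) with the determinant bound (Theorem \ref{detbound}), exactly in the style of the earlier proof that every metric spectrum has a rational representative. First I would let $P = P(\vx)$ be the spectral polyhedron, which is nonempty and pointed, so by Proposition \ref{existsvertex} it has a vertex $\vec{v}$, and by Lemma \ref{simpoly} (together with the fact that $P(\vx)$ sits inside the $\sim$-class of $\vx$) we have $\vec{v} \sim \vx$. Writing $A\vx \geq \vb$ for the defining system, pick $I \subseteq [m]$ with $|I| = n$, $B := A_{I,*}$ of rank $n$, and $B\vec{v} = \vec{b}_I$. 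Cramer's rule gives $v_i = \det(B_i)/\det(B)$, where $B_i$ replaces the $i$th column of $B$ by $\vec{b}_I$; since all entries are integers, $\vec{y} := \det(B)\cdot\vec{v}$ is an integral spectrum with $y_i = \det(B_i)$, and $\vec{y} \sim \vec{v} \sim \vx$ by scale invariance (Corollary \ref{scaleinvariant}).

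Next I would bound $y_n = \det(B_n)$. The key observation is that every row of $B$ is one of the vectors in the associated family $E$: either $\pm e_{(i,j,k)}^T$, or $p_{(i,j)}$, or $e_k^T$. Each $\pm e_{(i,j,k)}^T = \pm(\vec{e}_i + \vec{e}_j - \vec{e}_k)$ is covered by $\mathcal{F} + \mathcal{F}$ (it is a sum of $\pm(\vec{p}_{(i,j)})$-type and $\pm\vec{e}$-type vectors, or directly of two $\vec{e}$-vectors plus an $\vec{e}$-vector — more precisely it decomposes as $\pm\vec{p}_{(i,k)} \pm \vec{e}_j$ or similar, all entries being $0,\pm 1$ with at most three nonzero coordinates splitting into two $\mathcal{F}$-vectors), while $p_{(i,j)}$ and $e_k^T$ lie in $\mathcal{F}$ itself, hence are covered by $\mathcal{F}$ (and a fortiori by $\mathcal{F}+\mathcal{F}$). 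Replacing the $n$th column of $B$ by $\vec{b}_I$ only changes entries in that column, and $\vec{b}_I$ has entries in $\{0,1\}$, so each row of $B_n$ is still covered by $\mathcal{F}+\mathcal{F}$: if a row of $B$ was covered by some $\vec{f} \in \mathcal{F}+\mathcal{F}$, modifying its $n$th entry to lie in $\{0,1\}$ keeps it covered (one checks the sign/absolute-value conditions; if the modification creates a new nonzero coordinate of sign $+1$, absorb it into one of the two $\mathcal{F}$-summands). Therefore Theorem \ref{detbound} applies to $B_n$ and gives $|y_n| = |\det(B_n)| \leq 2^n$. Since $\vec{y}$ is a genuine spectrum (strictly increasing positive integers, as $\vec{v} \in P(\vx)$ and $\det(B) > 0$ after a sign choice), $y_n \leq 2^n$ as claimed.

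The main obstacle I anticipate is the bookkeeping in the last step: verifying that replacing the final column of $B$ by $\vec{b}_I$ preserves the "$\mathcal{F}+\mathcal{F}$-covered" property of every row, including rows where the original $n$th entry was already $\pm 1$ (in a $p_{(i,n)}$ row or an $e_{(\cdot,\cdot,n)}$ row or the $e_n^T$ row) and is now being overwritten by a possibly-different value in $\{0,1\}$. One must confirm the resulting row still has at most four nonzero entries splitting suitably into two $\mathcal{F}$-vectors, and that no sign conflict arises; a clean way is to note that changing one coordinate to a value of absolute value $\leq 1$ can only worsen the cover by at most one extra $\vec{e}_i$ or $-\vec{e}_i$ summand, which is still in $\mathcal{F}$, so the row stays within $\mathcal{F}+\mathcal{F}$. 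A secondary point to be careful about is the sign of $\det(B)$: one chooses the orientation (or multiplies by $\sgn(\det B)$) so that $\vec{y} = \sgn(\det B)\det(B)\vec{v}$ has positive entries; this is harmless since scaling by a positive constant preserves $\sim$ by Corollary \ref{scaleinvariant}. Everything else is a direct citation of the already-established lemmas.
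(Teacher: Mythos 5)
Your proposal is correct and takes essentially the same route as the paper: pick a vertex of the spectral polyhedron (Proposition \ref{existsvertex}, Lemma \ref{simpoly}), apply Cramer's rule, check that every row of the column-replaced matrix is covered by $\mathcal{F}+\mathcal{F}$, invoke Theorem \ref{detbound}, and rescale by $|\det(B)|$ using Corollary \ref{scaleinvariant}. The only deviation is cosmetic: you replace the $n$th column by $\vec{b}_I$ rather than $-\vec{b}_I$ as the paper does, which leaves $|\det(B_n)|$ unchanged and in fact makes the row-by-row covering bookkeeping you flag come out directly (e.g.\ a row $-\vec{e}_i-\vec{e}_j+\vec{e}_k$ with new last entry $1$ is covered exactly by $\vec{p}_{(i,k)}+\vec{p}_{(j,n)}$), so the concrete case check you outline is all that is needed rather than the looser ``add one extra $\pm\vec{e}_n$ summand'' heuristic.
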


\begin{proof}
    Let $P$ be the polyhedron associated to $\vx$, and let $A\vx \leq \vec{b}$. By Theorem \ref{existsvertex}, there is a vertex $\vec{y}\sim \vx $ that is a vertex of $P$. It follow that there is an associated $n$ by $n$ matrix $B= A_{I,*}$ such that $B\vec{y} = \vec{b}_I$. Let $B_n$ denote the matrix achieved by replacing the $n$th column of $B$ with $-\vec{b}_I$. By Cramer's rule,
    \begin{align*}
        y_n & = \frac{-\det(B_n)}{\det(B)}\\
        &=  \frac{| \det(B_n)|}{|\det(B)|}
    \end{align*}
    where the last equality is true as metric spectra are positive. Note that we have, for practical reasons, negated the last column of the standard denominator determinant in Cramer's rule, hence the strange -1. To bound $y_n$, it suffices to find a reasonable upper bound to $B_n$ since $|\det(B)| \geq 1$ is an integer.\\
    \\
    \textbf{Claim:} Every row of $B_n$ is covered by $\mathcal{F}+\mathcal{F}$. This can be seen with simple case work.
    \\
    \\
    \textbf{Case 1:} The row $ \vec{r}$ of $B_n$ ends in a $0$. In this case, the row in $B$ was not of the form $e_{(i,j,k)}$ or $p_{(i,j)} $. It follows that the row is a vector of the form $-e_{(i,j,k)} $ with the last entry turned to $0$. Hence, it is covered by  $\mathcal{F}+\mathcal{F}$, as $e_{(i,j,k)} $ is covered by $\vec{p}_{(i,k)} + \vec{p}_{(j,n)} $. \\
    \\
    \textbf{Case 2:} The row $ \vec{r}$ of $B_n$ ends in a $-1$. This means the row must be of the form $e_{i,j,k}$ , $p_{(i,j)} $ or $e_i$. Similar to the previous case, it will be covered by $\vec{p}_{(i,k)} + \vec{p}_{(j,n)} $.
    \\
    \\
    Hence, by Theorem \ref{detbound}, $y_n |\det(B)| \leq 2^n$. Since for every $i$, $y_i = \frac{|\det(B_i)|}{|\det(B)|}$, $|\det(B)| \vec{y} \sim \vec{y}$ by \ref{scaleinvariant} and also has that its $n$th entry is bounded by $2^n$.
\end{proof}

Recall that vertices are, in some sense, extremal. They are solutions to a linear minimization problem after all. As a consequence, vertices in a spectral polygon also have \textit{internal} bounds that can be exploited to achieve lower bounds for a representative, at the small cost of increasing the upper bound.

\begin{theorem}
    If $\vec{v}$ is a vertex of $P(v)$, $2^{n-i+1}v_i\geq v_n$ , for all $i<n$.\label{lowerbound}
\end{theorem}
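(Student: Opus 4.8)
The plan is as follows. Since $\vec{v}$ is a vertex it is the unique solution of a system $B\vec{v}=\vec{b}_I$ with $B=A_{I,*}$ an invertible $n\times n$ submatrix, so Cramer's rule gives $v_n/v_i=\det(B_n)/\det(B_i)$ for the usual column‑replaced minors; but that identity is circular, so the inequality has to be extracted from the \emph{shape} of the inequalities tight at $\vec{v}$ rather than from determinants alone. Throughout I will use two elementary facts valid for every $\vec{x}\in P(\vec{v})$: the rows $e_j^T$ give $x_j\ge 1$, and the rows $p_{(j,j+1)}$ give $x_{j+1}\ge x_j+1$; in particular every coordinate of $\vec{v}$ is at least $1$ and consecutive coordinates differ by at least $1$.

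The crucial input -- and the step I expect to be the main obstacle -- is the following structural claim: for each index $c$ with $2\le c\le n$ there is a defining inequality of $P(\vec{v})$ that is tight at $\vec{v}$ and whose supporting row has $c$ as the \emph{largest} index in its support. Granting this, note that $e_c^T$ cannot be tight for $c\ge 2$ (it would force $v_c=1\le v_1$), so such a row must be one of $e_{(a,b,c)}$, $-e_{(a,b,c)}$ or $p_{(a,c)}$, which at $\vec{v}$ read respectively $v_c=v_a+v_b$, $v_c=v_a+v_b+1$, $v_c=v_a+1$, with all indices occurring strictly below $c$. Letting $m$ be the largest of those smaller indices and using $v_a\le v_m$ together with $v_m\ge 1$, each case yields $v_c\le 2v_m+\delta_c$ for some $\delta_c\in\{0,1\}$.

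Now I would build a chain greedily. Put $c_0=n$. Given $c_s>i$, apply the structural claim at $c_s$ to get $v_{c_s}\le 2v_m+\delta_{c_s}$ as above; if $m\ge i$ set $c_{s+1}=m$, and if $m<i$ set $c_{s+1}=i$ and note that then $v_{c_s}\le 2v_m+1\le 2v_{i-1}+1\le 2v_i$ (using $v_i-v_{i-1}\ge 1$), so this last step may be taken with $\delta=0$. The indices strictly decrease and stay $>i$ until the final one equals $i$, so the chain $n=c_0>c_1>\cdots>c_t=i$ has $t\le n-i$ steps. Unwinding the inequalities,
\[
v_n \le 2^{t}v_{i}+\sum_{s=0}^{t-1}2^{s}\delta_{c_s}\le 2^{t}v_i+(2^{t}-1) < 2^{t+1}v_i \le 2^{\,n-i+1}v_i ,
\]
where the strict step uses $v_i\ge 1$. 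This is exactly the assertion of the theorem.

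It remains to prove the structural claim, which is where the real work lies. The intended route is by contradiction: if no tight row has top index $c$, split the $n$ linearly independent tight rows into those whose top index is $<c$ (supported on $\{1,\dots,c-1\}$) and those whose top index is $>c$, and derive a contradiction with full rank. The delicate point is that coordinate $c$ can occur in several tight rows of top index $>c$ with coefficients of both signs, so a naive $\pm e_c$ perturbation need not certify a non‑vertex; one has to exploit the special sign pattern of the metric rows -- the coefficient at the top index is always $\pm1$ and has the sign opposite to every other (nonzero) coefficient of that row, and all nonzero coefficients lie in $\{-2,-1,1,2\}$ -- to show that the rows of top index $>c$ cannot, after quotienting by $\langle e_1,\dots,e_{c-1}\rangle$, span the remaining $n-c+1$ coordinates without either producing a tight row of top index $c$ or forcing $v_a=v_b$ for some $a<b$, contradicting $v_1<\cdots<v_n$.
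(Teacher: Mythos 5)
Your reduction of the theorem to the structural claim is cleanly done, and the chain/unwinding computation would indeed give the bound (even with a strict inequality) if the claim held; but the structural claim itself---which you correctly identify as the crux and leave unproven---is false. Take $n=4$ and $\vec{v}=(3,5,6,10)$. Then $\vec{v}\in P(\vec{v})$, and the constraints of $P(\vec{v})$ that are tight at $\vec{v}$ are exactly $2x_1-x_3\ge 0$, $x_3-x_2\ge 1$, $x_4-x_1-x_3\ge 1$, and $2x_2-x_4\ge 0$; in particular every row with top index $2$ is slack ($x_2-x_1=2>1$, $2x_1-x_2=1>0$, $x_2=5>1$). Solving these four equalities gives back $(3,5,6,10)$ uniquely (their coefficient matrix has determinant $1$), so they are linearly independent and $\vec{v}$ is a vertex of $P(\vec{v})$; yet their top support indices are $3,3,4,4$, so no tight row has top index $c=2$. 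Hence the claim already fails for $n=4$, and with it the greedy chain as designed: nothing guarantees that the index $m$ handed to you at a given step itself carries a tight row of top index $m$ (in this particular example a chain happens to exist, but that is an accident, not a consequence of your argument). The difficulty you flag at the end---that coordinate $c$ may be pinned only through rows of higher top index---is not a technical nuisance to be argued away via the sign pattern; it is exactly what can happen, so the contradiction with full rank that your sketch aims for does not exist.

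The paper's proof takes a different route that sidesteps this issue entirely: for the fixed $i$ it adjoins to the active system the $i-1$ auxiliary vectors $\vec{s}_j=\vec{e}_j-\frac{v_j}{v_i}\vec{e}_i$ for $j<i$, which are orthogonal to $\vec{v}$ and covered by $\mathcal{F}$, extends them to a basis using rows of the active matrix, and then bounds $v_i/v_n$ by Cramer's rule: the determinant bound of Theorem \ref{detbound} (with the $m=i-1$ rows covered by $\mathcal{F}$) gives $|\det(B'_n)|\le 2^{n-i+1}$, while integrality and nonvanishing give $|\det(B'_i)|\ge 1$. If you want to rescue your purely combinatorial approach, you would need a correct substitute for the structural claim (for instance, a statement about which indices can arise as largest lower support indices of tight rows, together with a bound when an index is skipped), but as written the proposal has a genuine gap.
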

\begin{proof}
    Since $v$ is a vertex, there is an associated $n$ times $n$ matrix $B=A_{I,*}$ such that $B\vec{v}=\vec{b}_I$ and $B$ has rank $n$.
    For a fixed $i$, let $S=\{\vec{s}_j:j<i\}$ such that $\vec{s}_j=\vec{e}_j-\frac{v_j}{v_i}\vec{e}_i$, so there are $i-1$ vectors in $S$. Note that $\vec{v}$ is orthogonal to every vector in $S$ by construction and vectors in $S$ are covered by $\mathcal{F}$ because $v_j<v_i$ for $j<i$. Vectors in $S$ are also linearly independent because projection of $S$ to first $i-1$ coordinates makes the standard basis of $\mathbb{R}^{i-1}$ since $F(\vec{s}_j)= F(\vec{e}_j)-F(\frac{v_j}{v_i}\vec{e}_i)=\vec{e}_j-\vec{0}$, where $F$ is the projection map. Note that $\vec{e}_j$ in the last equality means $j$th unit vector in $\mathbb{R}^{i-1}$ so it is a slight abuse of notation. Now we use a well-known linear algebra fact that every linearly independent set of vectors can be extended to a basis using vectors from a full rank set. Let $I'$ be the set of rows in $B$ such that they extend $S$ to a basis. Let $B'=[S^T,(B_{I',*})^T]^T$ and let $\vec{b'}=[\vec{0}_{i-1}^T,b_{I'}^{T}]^{T}$. Since rows of $B'$ form a basis, $\vec{v}$ is the unique solution to the equation $B'\vec{x}=\vec{b'}$. Moreover, since now we know rows of $B_n$ are covered by $\mathcal{F}+\mathcal{F}$ and all $i-1$ vectors from $S$ are covered by $\mathcal{F}$ and remain unchanged under replacing the last column by zeros since $j<i<n$, by Theorem \ref{detbound} we know $|\det(B'_n)|\leq 2^{n-i+1}$. Now by Crammer's rule $$\frac{v_i}{v_n}=\frac{\frac{|\det(B'_i)|}{|\det(B')|}}{\frac{|\det(B'_n)|}{|\det(B')|}}=\frac{|\det(B'_i)|}{|\det(B'_n)|}\geq \frac{|\det(B'_i)|}{2^{n-i+1}}$$It suffices to prove $|\det(B'_i)|\geq 1$, which we will do by proving $\det(B'_i)$ is a non-zero integer. Since $\vec{v}$ is a spectrum, $v_i\neq 0$, and so $\det(B'_i)\neq0$. Finally, by construction all entries of $B'_i$ are integers: all rows from $A$ are integral and the only entries from $S$ which are not are by construction in the $i$th column of $B'$, so they get replaced by zeros in $B'_i$, which finishes the proof.
\end{proof}
\begin{corollary}\label{bounded}
    Every spectrum is equivalent to an integral spectrum $\vec{y}$ such that $y_n\leq 2^n$ and $2^{n-i+1}y_i\geq y_n$.
\end{corollary}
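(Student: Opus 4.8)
The plan is to simply combine Theorem~\ref{upperbound} and Theorem~\ref{lowerbound} by applying both of them to one and the same vertex. First I would invoke Proposition~\ref{existsvertex} to produce a vertex $\vec{v}$ of the spectral polyhedron $P(\vx)$; by Lemma~\ref{simpoly} this vertex satisfies $\vec{v}\sim\vx$. Associated to it is an $n\times n$ full-rank submatrix $B=A_{I,*}$ with $B\vec{v}=\vec{b}_I$, exactly as in the proofs of the two preceding theorems. Set $\vec{y}=|\det(B)|\,\vec{v}$. Since Cramer's rule gives $v_i=|\det(B_i)|/|\det(B)|$, each coordinate of $\vec{y}$ equals $|\det(B_i)|\in\ZZ$, so $\vec{y}$ is integral, and by Corollary~\ref{scaleinvariant} we still have $\vec{y}\sim\vec{v}\sim\vx$.

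Next I would read off the upper bound directly from the argument in the proof of Theorem~\ref{upperbound}: every row of the matrix $B_n$ obtained by replacing the last column of $B$ by $-\vec{b}_I$ is covered by $\mathcal{F}+\mathcal{F}$, so Theorem~\ref{detbound} yields $|\det(B_n)|\leq 2^n$. As $y_n=|\det(B_n)|$, this is precisely $y_n\leq 2^n$.

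For the lower bounds, Theorem~\ref{lowerbound} applied to the vertex $\vec{v}$ gives $2^{n-i+1}v_i\geq v_n$ for every $i<n$. Both sides of this inequality are homogeneous of degree one in the entries of $\vec{v}$, so multiplying through by the positive integer $|\det(B)|$ preserves it, giving $2^{n-i+1}y_i\geq y_n$ for all $i<n$; the remaining case $i=n$ is trivial since $2^{n-n+1}y_n=2y_n\geq y_n$. Thus $\vec{y}$ is the required integral spectrum.

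I do not anticipate a genuine obstacle here, as the statement is a bookkeeping consequence of the two previous theorems. The only points requiring a word of care are that the \emph{same} vertex must witness both bounds — which it does, since Theorem~\ref{upperbound} and Theorem~\ref{lowerbound} both apply to an arbitrary vertex of the spectral polyhedron — and that the integer rescaling used to obtain integrality does not spoil either estimate, which is immediate because both $y_n\leq 2^n$ and $2^{n-i+1}y_i\geq y_n$ are invariant under multiplication of $\vec{y}$ by a positive scalar.
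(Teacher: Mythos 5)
Your proposal is correct and follows essentially the same route as the paper: the paper's own proof simply observes that the integral representative of Theorem~\ref{upperbound} has the form $\vec{y}=|\det(B)|\,\vec{v}$ for a vertex $\vec{v}$, and then invokes Theorem~\ref{lowerbound} together with the homogeneity of the inequalities, exactly as you do. Your write-up just spells out the bookkeeping (integrality via Cramer's rule and scale invariance of both bounds) in more detail than the paper's one-line argument.
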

\begin{proof}
    Follows directly from Theorem \ref{lowerbound} and the fact that construction in Theorem \ref{upperbound} is of the form $\vec{y}=c\cdot \vec{v}$ where $\vec{v}$ is a vertex.
\end{proof}
We can now prove a weaker solution to Conants' problem. 

\begin{theorem}
    Given a metric spectra $\vec{x} \in \RR^n$, there is a metric spectra $\vec{y} \sim \vec{x} $ such that for all $i$, $ 2^i \leq y_i \leq 2^{n+1} $.
\end{theorem}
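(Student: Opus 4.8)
The plan is to obtain $\vec{y}$ simply by rescaling the representative furnished by Corollary \ref{bounded}. First I would invoke Corollary \ref{bounded} to fix a spectrum $\vec{w} \sim \vec{x}$ with $w_n \le 2^n$ and $2^{n-i+1} w_i \ge w_n$ for every $i$ (the underlying fact being Theorem \ref{lowerbound}, the ``internal'' lower bound at a vertex of the spectral polyhedron). The key observation is that these ratio inequalities are precisely what is needed to stretch $\vec{w}$ into the box $[2^i,2^{n+1}]$: define $\vec{y} := \tfrac{2^{n+1}}{w_n}\,\vec{w}$. Since $\tfrac{2^{n+1}}{w_n} > 0$, Corollary \ref{scaleinvariant} gives $\vec{y} \sim \vec{w} \sim \vec{x}$, and $\vec{y}$ is again a spectrum.

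For the two bounds I would argue as follows. The top coordinate is pinned exactly: $y_n = \tfrac{2^{n+1}}{w_n} w_n = 2^{n+1}$, and because $\vec{y}$ is a spectrum it is strictly increasing, so $y_i \le y_n = 2^{n+1}$ for all $i$ (and in particular $2^n \le y_n \le 2^{n+1}$, so the claim holds at $i=n$ as well). For the lower bound, rewrite $2^{n-i+1} w_i \ge w_n$ as $w_i \ge w_n/2^{n-i+1}$ and substitute:
\[
y_i \;=\; \frac{2^{n+1}}{w_n}\, w_i \;\ge\; \frac{2^{n+1}}{w_n}\cdot\frac{w_n}{2^{n-i+1}} \;=\; 2^{i}.
\]
Hence $2^i \le y_i \le 2^{n+1}$ for every $i$, which is the assertion.

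I do not expect a genuine obstacle here: the substantive work is already contained in Theorem \ref{lowerbound} and Corollary \ref{bounded}, and the remaining step is a single rescaling. The only conceptual point is to notice that the one scaling factor $2^{n+1}/w_n$ does double duty, normalizing the largest coordinate to the upper bound while the ratio inequality simultaneously forces each smaller coordinate above $2^i$. The price, which is why the statement is phrased for a metric spectrum rather than for an integral spectrum, is that this $\vec{y}$ is in general only rational: demanding $y_i \ge 2^i$ for all $i$ together with $y_n \le 2^{n+1}$ is incompatible with taking the integer multiple $\lvert\det B\rvert\,\vec{v}$ of a vertex $\vec{v}$ (that construction, from Theorem \ref{upperbound}, forces $y_n \le 2^n$ and hence some $y_i < 2^i$), so we deliberately keep a real representative here.
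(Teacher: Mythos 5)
Your rescaling argument is correct for the statement as literally written: $\vec{y}=\tfrac{2^{n+1}}{w_n}\vec{w}$ is a spectrum, it is equivalent to $\vec{x}$ by Corollary \ref{scaleinvariant}, and the ratio inequalities from Corollary \ref{bounded} give exactly $2^i\le y_i\le y_n=2^{n+1}$, just as you compute. The paper's proof does one additional thing, and this is where the two routes genuinely differ: it sets $c=2^{n+1}/z_n\ge 2$ and takes $y_i=\lceil c\,z_i\rceil$, i.e.\ it rounds the rescaled \emph{integral} representative up to the nearest integer. The reason is that this theorem is meant to supply the second half of the main result announced in the introduction (and of Conant's question), namely an integral representative $T\subseteq\NN$ with $2^i\le t_i\le 2^{n+1}$; a pure rescaling only produces a rational vector. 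Your closing claim that integrality is incompatible with these bounds is not correct: since $\vec{z}$ is integral, any strict inequality $z_i+z_j<z_k$ is an inequality by at least $1$, hence by at least $c\ge 2$ after scaling, and this slack absorbs the error of less than $1$ per coordinate introduced by the ceilings, so $\lceil c\,z_i\rceil$ satisfies exactly the same metric-triple inequalities as $\vec{z}$ (the case $z_i+z_j\ge z_k$ is preserved because $\lceil a\rceil+\lceil b\rceil\ge\lceil a+b\rceil$ and the ceiling is monotone). Moreover $y_n=c\,z_n=2^{n+1}$ exactly, and $y_i=\lceil 2^{n+1}z_i/z_n\rceil\ge 2^i$ follows from $2^{n-i+1}z_i\ge z_n$ by the same computation you gave. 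So your argument proves the statement as phrased and isolates the right ingredients (Theorem \ref{lowerbound} via Corollary \ref{bounded} plus scale invariance), but to obtain what the paper actually needs you should add this ceiling step rather than conclude that only a real representative is available.
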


\begin{proof}
    By Corollary \ref{bounded}, there is a metric spectra $\vec{z} \sim \vx$ such that $z_n \leq 2^n$ and $2^{n-i+1}z_i\geq z_n$. Let $c= \frac{2^{n+1}}{z_n} \geq 2$. Consider the vector $\vec{y}$ defined coordinate-wise by $y_i = \lceil c z_i \rceil$. It is easy to verify that $\vec{y}\sim \vec{z}$ utilizing that $c\geq 2$. We also have $y_n = 2^{n+1}$. It suffices to show that $y_i \geq 2^i$. 
    \begin{align*}
        y_i &= \lceil c z_i \rceil = \lceil 2^{n+1}\frac{z_i}{z_n} \rceil\geq \lceil \frac{2^{n+1}}{2^{n-i+1}} \rceil=2^i.
    \end{align*}.

\end{proof}

While this concludes our main result, much work still remains to be done. It is still not clear if Conant's question has a positive answer, though all signs currently point to yes. We believe we have provided a very strong first step towards solving the problem. Moreover, our results are conducive to computational verification for low dimensions. Given Conant's statement is true for $n\leq 6$ (which was proven via computer programming) we applied a brute-force check for $n = 7$. In fact, we currently have verified that Conant's statement is true for $n=7$. We would love to collaborate with those who believe they can find a faster algorithm and check for much larger $n$.

\section*{Acknowledgements}

The author would like to thank his instructor Dragan Ma\v{s}ulovi\'c for introducing him to this question. The author would also like to graciously thank Keegan Dasilva Barbosa for initially providing the question to Dragan, and for supervising the author during the entirety of the project. Tangentially, the author would like to thank The Fields Institute for Research in Mathematical Sciences, as it was thanks to their grant that Keegan was able to work in a supervisory capacity.


\begin{thebibliography}{9}
    
    
    \bibitem{ConantThesis} G. Conant, \textit{Model Theory and Combinatorics of Homogeneous Metric Spaces}, Ph.D. Thesis (2015).

    \bibitem{4value} C. Delhomm\'e, C. Laflamme, M. Pouzet, N. Sauer, \textit{Divisibility of Countable Metric Spaces}, Eur. J. Combin. (2007)

    \bibitem{KPT} A. S. Kechris, V. G. Pestov, and S. Todorcevic, \textit{ Fra{\"i}ss{\'e} limits, Ramsey theory, and topological dynamics of automorphism groups}, Geom. Funct. Anal. vol. 15 no. 1 (2005)

    \bibitem{Mas} D. Ma\v{s}ulovi\'c, \textit{Finite big Ramsey degrees in universal structures}, Journal of Comb. Theory Series A, vol. 170 (2020)

    \bibitem{metareramsey} J. Ne\v{s}et\v{r}il, \textit{Metric spaces are Ramsey}, European Journal of Comb. vol. 28 (2007)

    \bibitem{OpenProblems} L. Nguyen van Th\'e, \textit{Homogeneous Structures - A List of Open Problems}, Cont. to Dis. Math. vol. 16 (2021)

    \bibitem{NVT} L. Nguyen van Th\'e, \textit{Structural Ramsey Theory of Metric Spaces and Topological Dynamics of Isometry Groups}, Mem. Amer. Math. Soc. (2010)
    
    \bibitem{NVTThesis} L. Nguyen van Th\'e, \textit{Theorie de Ramsey Structurale des Espaces Metriques et Dynamique Topologique des Groupes d'Isometries}, Thesis (2006)



    \bibitem{Paffenholz} A. Paffenholze, \textit{Polyhedral Geometry and Linear Optimization}, FU Berlin (2010)


    \bibitem{Sauer} N. Sauer, \textit{Distance sets of Urysohn metric spaces}, Canadian Journal of Mathematics, vol 65 (2013)

\end{thebibliography}
\end{document}